\newcounter{razdel}[section]
\newtheoremstyle{Mystyle}
     {\topsep}
     {\topsep}
     {\it}
     {}
     {\bfseries}
     { }
     { }
     {\thmnumber{#2.~}\thmname{#1}\thmnote{ #3}.}
\theoremstyle{Mystyle}
\newtheorem{Theorem}[razdel]{Theorem}
\newtheorem{Proposition}[razdel]{Proposition}
\newtheorem{Lemma}[razdel]{Lemma}
\newtheorem{Remark}[razdel]{Remark}
\newcommand \rem[1]{}
\begin{document}

\title{Shadowing for differential equations with grow-up.}

\author{Alexey V. Osipov
\thanks{ Email: osipovav28@googlemail.com 
}
}
\affil{Centro di ricerca matematica Ennio de Giorgi, Scuola Normale Superiore,
Piazza dei Cavalieri 3,
Pisa, 56100, Italy\\ Chebyshev Laboratory, Saint-Petersburg State University,
14th line of Vasilievsky island, 29B, Saint-Petersburg, 199178, Russia}

\maketitle

\abstract{We consider the problem of shadowing for differential equations with grow-up. We introduce so-called nonuniform shadowing properties (in which size of the error depends on the point of the phase space) and prove for them analogs of shadowing lemma. Besides, we prove a theorem about weighted shadowing for flows. We compactify the system (using Poincare compactification, for example), apply the results about nonuniform or weighted shadowing to the compactified system, and then transfer the results back to the initial system using the decompactification procedure.}

\textbf{Keywords. }
Shadowing, grow-up, hyperbolicity, Poincare compactification, time change.

\section{Introduction and main definitions.}

Consider a system of ODEs 
\begin{equation}
\label{0}
\dot{x} = X(x),\quad x\in\mathbb{R}^N.
\end{equation}
 We say that 
it has \textit{grow-up} if it has a solution $|x(t,x_0)|\rightarrow\infty$ as $t\rightarrow +\infty$. 

In the modern literature there are a lot of works devoted to study of grow-up and blow-up (a solution ''reaches'' infinity within a finite time) both for ODEs and PDEs (see, e.g., \cite{Ben, FM, VF}). Developing theory of shadowing for such equations seems to be an interesting and challenging problem.

Theory of shadowing studies the problem of closeness of approximate and exact trajectories of dynamical systems. Roughly speaking, a dynamical system has a shadowing property if any sufficiently precise approximate trajectory is close to some exact trajectory. 
We are interested in introducing shadowing properties for differential equations with grow-up and in obtaining relevant criteria. Thus we want to answer the following question (under reasonable assumptions): suppose we have a reasonable approximate solution going to infinity for infinite time; is it true that there exists an exact solution that is in some sense close to our approximate solution?

Usually theory of shadowing (see \cite{Palm, Pil} for review of classical results and \cite{Pil2} for review of modern results) establishes shadowing properties for dynamical systems on a compact phase space or establishes shadowing properties in a small neighborhood of a compact invariant set (e.g., shadowing near a hyperbolic set). Note that we deal with a dynamical system on a noncompact phase space (i.e. on $\mathbb{R}^N$).

It is reasonable to act according to the following plan:
\begin{enumerate}
	\item to compactify our system (using, e.g., Poincare compactification),
	\item to establish some shadowing property for the compactified system,
	\item to transfer the property back to the original system.
\end{enumerate}


It is relatively easy to understand that the standard shadowing property for flows (we will remind the definition below in the paper) is bad for this scheme. In order to act according to this scheme, one should consider shadowing properties with errors decreasing to zero sufficiently fast (weighted shadowing) or shadowing properties with errors depending on the point of the phase space (we call it nonuniform shadowing).

The rest of the paper is organized as follows: in Section 2 we discuss compactifications (Step 1 of the plan), 
Section 3 is a brief introduction to classical theory of shadowing, in Section 4 we define and study nonuniform shadowing properties, in Section 5 we study weighted shadowing properties, and in Section~6 we discuss plans for further research.

Main results of the paper are Theorems \ref{Th3} and \ref{Th6} and their compactified versions Theorems \ref{ThM} and \ref{Th4}.

\section{Poincare compactification.}

It is possible to compactify our system $(\ref{0})$ in various ways (see \cite{Hell} for excellent description of compactifications). The most obvious way is just to add one point as infinity. If we do it, we will get a system (or vector field) on the $N$-dimensional sphere, $S_{N}$, without of one point. But, of course, in general, in order to get a vector field on $S_{N}$, (consider, e.g., any system with blow-up) we should apply a time change of a certain type. This procedure (the compactification of space and the time change) is called Bendixon compactification. It works not for an arbitrary vector field, but only for so-called normalizable vector fields. Any polynomial vector field belongs to the class of normalizable vector fields.

However we are not going to apply Bendixon compactification by the following reasons:
\begin{enumerate}
	\item it is very likely that the point on $S_N$ corresponding to infinity will be a degenerate point of very high order;
	\item Bendixon compactification does not allow to distinguish ''convergence to infinity by different directions''.
\end{enumerate}
Instead we will use the procedure called Poincare compactification. Similarly with Bendixon compactification it consists of two phases: a compactification of the phase space and a change of time.

We compactify the phase space in the following way: we consider the map $\Theta:\mathbb{R}^N\mapsto B_N$ defined by the formula
\begin{equation}
\label{CP}
\Theta(x) := \frac{x}{\sqrt{(|x|^2 + 1)}},
\end{equation}
where the coordinates in the $N$-dimensional ball, $B_N$, are chosen like on Fig.~\ref{Fig1}.

\begin{figure}
	\centering
\def\svgwidth{\columnwidth}
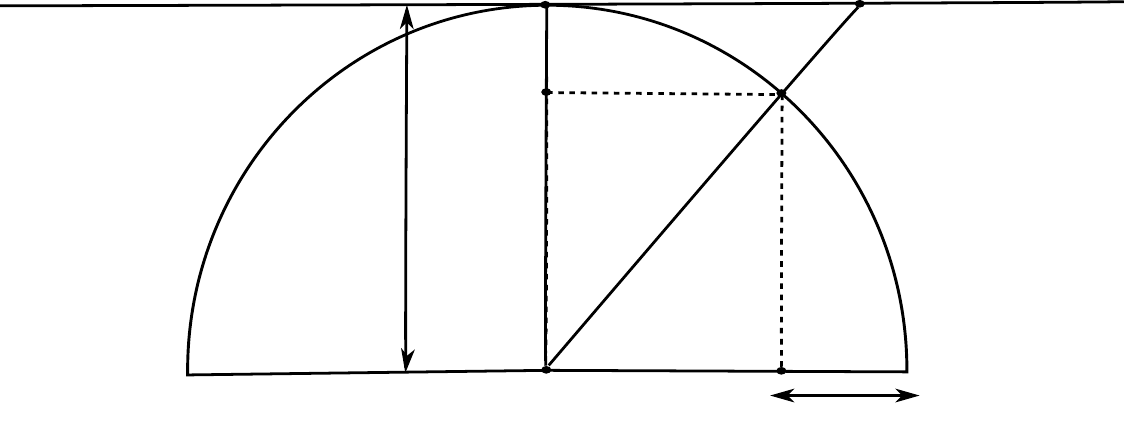
\caption{Compactification of the phase space.}
\label{Fig1}
\end{figure}

If we apply the compactification $(\ref{CP})$ to the system $(\ref{0})$, we will get a system $\dot{\bar{x}}=\bar{X}(\bar{x})$ on $B_N\backslash\partial B_N$ (hereinafter $\partial B_N$ denotes the boundary of the $N$-dimensional ball $B_N$). It is easy to understand that, in general, we will not get a system on $B_N$ (consider, e.g., any system with blow-up). Similarly with Bendixon compactification in order to get a system on $B_N$ we should apply a time change of a certain type. This procedure (the change of phase space and the change of time) is called \textit{Poincare compactification}.
Poincare compactification is defined not for an arbitrary vector field $(\ref{0})$, but only for the class of so-called normalizable vector fields. Any polynomial vector field belongs to the class of normalizable vector fields. Hereinafter we assume that we deal with normalizable vector fields.


Consider polar coordinates $x=(z,\phi_1,\ldots,\phi_{N-1})$ in $\mathbb{R}^N$. Consider polar coordinates in $B_N$: $\bar{x}=\left(\bar{z}, \phi_1,\ldots,\phi_{N-1} \right)$. Naturally the compactification map~$(\ref{CP})$ can be rewritten in the following way:
\begin{equation}
\label{rR}
\bar{z} = \sqrt{\frac{z^2}{1 + z^2}} = \sqrt{1 - \frac{1}{z^2 + 1}}
\end{equation}
and angles $\phi_1,\ldots,\phi_{N-1}$ do not change.

 Consider a ball $U(\bar{R},\bar{x})\subset B_N$ of radius $\bar{R}$. We assume that $U(\bar{R},\bar{x})$ does not intersect the boundary. We want to find (a reasonably small) $R$ such that
$$\Theta^{-1}(U(\bar{R},\bar{x})) \subset U(R,x),$$
i.e. we want to understand how the ball $U(\bar{R},\bar{x})$ can be expanded via the decompactification procedure.

Note that, since we are interested only in getting a qualitative estimate, and polar coordinates and Cartesian coordinates generate equivalent topologies, it is enough to consider only the change of radial coordinates.
Put $\bar{y} = 1-\bar{z}$ (i.e. $\bar{y}$ is radial distance to the boundary). Then it easy to compute that
$$z = \sqrt{\frac{1}{2\bar{y} - \bar{y}^2} - 1}.$$
 Assuming that the ball does not intersect the boundary, the points $(\bar{y}-\bar{R},\ldots)$ and $(\bar{y} + \bar{R},\ldots)$ are mapped to the points
$(\sqrt{-1 + 1/(2(\bar{y}-\bar{R}) - (\bar{y}-\bar{R})^2)},\ldots)$ and $(\sqrt{-1 + 1/(2(\bar{y}+\bar{R}) - (\bar{y}+\bar{R})^2)},\ldots)$. After careful  calculations we see that
$$
\Theta^{-1}(U(\bar{R},\bar{x})) \subset U(R,\Theta^{-1}(x)),\quad
\mbox{where }R = O(\bar{R}/(\bar{y}^{3/2})).
$$
and taking into consideration that
\begin{equation}
\label{transferprime} 
\bar{y} = 1 - \sqrt{1 - \frac{1}{z^2 + 1}} = O\left(\frac{1}{z^2}\right) = O\left(\frac{1}{|x|^2}\right).
\end{equation}
we get
\begin{equation}
\label{transfer}
\Theta^{-1}(U(\bar{R},\bar{x})) \subset U(R,\Theta^{-1}(x)),\quad
\mbox{where }R = O(\bar{R}|x|^{3}).
\end{equation}

Now we consider the inverse problem. Consider a ball $U(R,x)\subset\mathbb{R}^N$. We want to find (a reasonably small) $\bar{R}$ such that
$$\Theta(U(R,x))\subset U(\bar{R},\bar{x}).$$
By \eqref{rR}, the points $(z-R,\ldots)$ and $(z+R,\ldots)$ are mapped to the points
$(1-\sqrt{1 - 1/((z-R)^2 + 1)},\ldots)$ and $(1-\sqrt{1-1/((z+R)^2+1)},\ldots)$. After careful calculations and using \eqref{transferprime}, we observe that
\begin{equation}
\label{transferalt}
\Theta(U(R,x))\subset U(\bar{R},\Theta(x)),\qquad\mbox{where }\bar{R}=O(R/|x|^3)=O(R|\bar{y}|^{3/2}).
\end{equation}

\section{Standard shadowing properties.}

Consider a diffeomorphism $f$ of a compact smooth Riemannian manifold $M$ with Riemannian metric dist. A \textit{trajectory} of a point $q$ of the diffeomorphism $f$ is the sequence 
$$O(q,f) = \{f^k(q)\}_{k\in\mathbb{Z}}.$$
A sequence $\{x_k\}_{k\in\mathbb{Z}}$ of points of $M$ is a $d$\textit{-pseudotrajectory} if
\begin{equation}
\label{pseudo}
\mbox{dist}(x_{k+1},f(x_k))\leq d\quad\forall k\in\mathbb{Z}.
\end{equation}
Clearly the notion of a pseudotrajectory is one of possible formalizations of the notion of an approximate trajectory.

A diffeomorphism $f$ has \textit{standard shadowing property} if for any $\epsilon>0$ there exists $d_0$ such that for any $d$-pseudotrajectory $\{x_k\}_{k\in\mathbb{Z}}$ with $d\leq d_0$ there exists a point $q$ such that
$$\mbox{dist}(x_k,f^k(q))\leq\epsilon\quad\forall k\in\mathbb{Z}.$$
Thus standard shadowing means that any sufficiently precise pseudotrajectory is pointwisely close to some exact trajectory.

This property is also called \textit{two-sided} standard shadowing property, because biinfinite trajectories and pseudotrajectories are considered. Also so-called \textit{one-sided} standard shadowing property is considered, in which pseudotrajectories and trajectories are indexed by natural numbers (clearly this property is weaker for diffeomorphisms than the two-sided version).  Moreover, so-called \textit{Lipschitz} standard shadowing property is considered (in which $d=\epsilon/L$, where $L$ is a global constant).

One of the main results of theory of shadowing is so-called shadowing lemma (see \cite{An,Bow}):
\begin{Theorem}[(Anosov, Bowen)]
A diffeomorphism has Lipshitz standard shadowing property in a small neighborhood of a hyperbolic set.
\end{Theorem}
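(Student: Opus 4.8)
The plan is to recast the shadowing problem as a fixed‑point equation on a space of bounded sequences and to solve it by a contraction argument whose linearization is made invertible by hyperbolicity. Fix a hyperbolic set $\Lambda$ for $f$, with invariant splitting $T_\Lambda M=E^s\oplus E^u$ and dichotomy constants $C_0\ge 1$, $\lambda\in(0,1)$. First I would extend $E^s,E^u$ to a continuous (no longer invariant) splitting over a neighbourhood $V$ of $\Lambda$, and shrink $V$ so that along every orbit segment lying in $V$ the derivative $Df$ still contracts a narrow cone around $E^s$ and expands a narrow cone around $E^u$ with rates close to $\lambda$. Then shrink once more to $V_0$ with $\overline{V_0}\subset V$, and fix $d_0>0$ so small that every $d_0$‑pseudotrajectory with all points in $V_0$ stays inside $V$.

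Given a $d$‑pseudotrajectory $\{x_k\}_{k\in\Z}\subset V_0$ with $d\le d_0$, I would look for the shadowing orbit in the form $y_k=\exp_{x_k}(v_k)$ with $\sup_k\|v_k\|$ small. Writing $\psi_k=\exp_{x_{k+1}}^{-1}\circ f\circ\exp_{x_k}$, the orbit equation $y_{k+1}=f(y_k)$ becomes $v_{k+1}=\psi_k(v_k)$; expanding $\psi_k$ at the origin gives
\[
v_{k+1}-A_k v_k = z_k + g_k(v_k),
\]
where $A_k=D\psi_k(0)$ is $C^0$‑close to $Df(x_k)$ read through the charts, $z_k=\psi_k(0)=\exp_{x_{k+1}}^{-1}(f(x_k))$ satisfies $\|z_k\|\le C_1 d$ since $\dist(f(x_k),x_{k+1})\le d$, and $g_k$ collects the higher‑order part, with $g_k(0)=0$, $Dg_k(0)=0$, and Lipschitz constant $\le\omega(r)\to 0$ as $r\to 0$ on the ball of radius $r$ (uniform continuity of $Df$ on $\overline V$). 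Letting $L$ be the linear operator $\{v_k\}\mapsto\{v_{k+1}-A_k v_k\}$ on $\ell^\infty$ and $\mathcal N$ the substitution map $\{v_k\}\mapsto\{g_k(v_k)\}$, the problem is $Lv=z+\mathcal N(v)$.

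The key step is that $L$ is invertible with $\|L^{-1}\|\le C_2$, where (once $V$ is fixed) $C_2$ depends only on $C_0$ and $\lambda$, not on the pseudotrajectory. This is the standard fact that a discrete linear cocycle with an exponential dichotomy makes $I-(\text{weighted shift})$ boundedly invertible: one splits each sequence along the approximately invariant splitting, solves the stable coordinate by summing forward and the unstable coordinate by summing backward, and bounds both sums by geometric series with ratio bounded away from $1$; the small off‑diagonal error coming from $A_k$ not exactly preserving the splitting is absorbed by the earlier shrinking of $V$ (equivalently, one runs the cone‑field version of the dichotomy). With $\|L^{-1}\|\le C_2$ in hand, I would apply Banach's fixed‑point theorem to $\Phi(v):=L^{-1}z+L^{-1}\mathcal N(v)$ on the ball $B_r=\{\sup_k\|v_k\|\le r\}$ with $r:=2C_1C_2 d$: once $d$ (hence $r$) is small enough that $C_2\,\omega(r)\le\tfrac12$, one has $\|\Phi(v)\|\le C_2 C_1 d+C_2\omega(r)r\le r$ and $\|\Phi(v)-\Phi(v')\|\le C_2\omega(r)\|v-v'\|\le\tfrac12\|v-v'\|$, so there is a unique fixed point $v\in B_r$. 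Then $q:=\exp_{x_0}(v_0)$ gives $\dist(x_k,f^k(q))\le\|v_k\|\le r=2C_1C_2\,d$ for all $k$, so with $L:=2C_1C_2$ we obtain $\epsilon$‑shadowing whenever $d\le\epsilon/L$, i.e. the Lipschitz standard shadowing property in the neighbourhood $V_0$ of $\Lambda$.

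The main obstacle is the linear step. Since a pseudotrajectory is not an orbit, the hyperbolic splitting pulled back along $\{x_k\}$ is not $Df$‑invariant, so one must check that the dichotomy is robust under the $C^0$‑small perturbation produced both by leaving $\Lambda$ and by the exponential‑chart conjugation — which is precisely why $V$ has to be shrunk before $d_0$ is chosen, and why the constant $C_2$ must be extracted uniformly. Everything else (the chart expansion, the estimates on $z_k$ and $g_k$, and the contraction bound) is routine given $C^1$, or $C^{1+\mathrm{Lip}}$, regularity of $f$.
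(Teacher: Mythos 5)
Your proposal is correct, and it is the classical analytic proof of the shadowing lemma: pass to exponential charts along the pseudotrajectory, use hyperbolicity (via robustness of the exponential dichotomy under the $C^0$-small perturbations caused by leaving $\Lambda$ and by the chart conjugation) to invert the difference operator $\{v_k\}\mapsto\{v_{k+1}-A_kv_k\}$ on bounded sequences with a uniform bound, and close with the Banach fixed-point theorem; you correctly identify the one delicate point, namely that the splitting is only approximately invariant along a pseudo-orbit, and the cone-field/roughness argument you invoke is the standard way to handle it. Note, however, that the paper does not prove this statement at all: it is quoted as the Anosov--Bowen theorem with references \cite{An,Bow}, so there is no in-paper proof to match. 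The relevant comparison is with the machinery the paper does develop for its own shadowing results (Theorem \ref{ThM}): Conley's topological method of crossed boxes, formalized in Lemma \ref{LK}, where one propagates neighborhoods $U(\delta(x_k),x_k)$ whose stable projections are mapped into, and whose unstable projections are mapped over, the next box (relations \eqref{stable} and \eqref{unstable}), and existence of a shadowing point follows from a purely topological intersection argument rather than from inverting a linear operator. Your contraction-mapping route buys uniqueness of the shadowing orbit and an explicit Lipschitz constant $2C_1C_2$ read off from the norm of the inverse operator; the Conley route requires no functional-analytic setup and adapts more flexibly to the paper's nonuniform setting, where the box sizes $\delta(x_k)$ vary with the distance to the boundary, only one-sided pseudotrajectories are available, and uniqueness can genuinely fail (in Case 2 of Theorem \ref{ThM} the set of shadowing points is an $s$-dimensional disk). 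Either method proves the cited classical theorem; yours is essentially the proof found in \cite{Pil} and \cite{Palm}.
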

Recently the following result was obtained (see \cite{PT}):
\begin{Theorem}[(Pilyugin, Tikhomirov)]
Lipschitz standard shadowing property is equivalent to structural stability.
\end{Theorem}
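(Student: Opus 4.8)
The statement asserts the equivalence, for a diffeomorphism $f$ of a compact manifold $M$, of the Lipschitz standard shadowing property with structural stability. I would prove the two implications separately, using as a bridge the Mañé--Robbin--Robinson characterization of structural stability as ``Axiom A together with the strong transversality condition.''

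\textbf{Structural stability implies Lipschitz shadowing.} Assuming structural stability, Mañé's theorem gives Axiom A, and structural stability also forces strong transversality, so $f$ satisfies Axiom A $+$ strong transversality. I would then set up the shadowing problem analytically: shadowing a $d$-pseudotrajectory $\{x_k\}$ amounts to solving, to leading order, the linear difference equation $v_{k+1} = Df(x_k)\,v_k + w_k$ for a bounded sequence $\{v_k\}$, where $\|w_k\|\le d$. The Anosov--Bowen shadowing lemma quoted above gives Lipschitz shadowing with a uniform constant in a neighborhood of each basic set; the strong transversality condition is precisely what is needed to glue these local solutions along a finite Smale filtration into a single global bounded solution of the linear equation, with a right inverse bounded by a constant depending only on $f$. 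A Newton--Kantorovich (or contraction-mapping) argument then upgrades the linear estimate to the nonlinear statement, yielding Lipschitz shadowing on all of $M$.

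\textbf{Lipschitz shadowing implies structural stability.} This is the substantial direction. The first step is to linearize the hypothesis. Fix an exact trajectory $p_k = f^k(p)$ and a bounded sequence of tangent vectors $\{w_k\}$ along it; build the family of $d$-pseudotrajectories $x_k = \exp_{p_k}(d\,v_k)$ from approximate solutions of the linearized equation, apply Lipschitz shadowing, and let $d\to 0$. One concludes that the operator
\[
(\mathcal{L}v)_k = v_{k+1} - Df(p_k)\,v_k
\]
acting on bounded sequences of tangent vectors along the orbit of $p$ is surjective, with a right inverse of norm bounded by a constant $L$ that does not depend on $p$. The second step is to turn this uniform analytic condition into geometry: uniform surjectivity of $\mathcal{L}$ over periodic orbits forces every periodic point to be hyperbolic with hyperbolicity constants controlled by $L$; a Pliss/Mañé selecting-lemma argument, together with a dominated-splitting construction, propagates this uniform hyperbolicity from the periodic orbits to the whole nonwandering set, giving Axiom A; and surjectivity of $\mathcal{L}$ along heteroclinic orbits translates into transversality of stable and unstable manifolds, i.e. strong transversality. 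Finally, Axiom A $+$ strong transversality implies structural stability by the Robbin--Robinson theorem, closing the loop.

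\textbf{Main obstacle.} The hard core is the passage from ``all periodic orbits are uniformly hyperbolic'' (equivalently, $\mathcal{L}$ is uniformly surjective on periodic orbits) to ``the nonwandering set is hyperbolic.'' This is exactly the difficulty at the heart of the stability conjecture and cannot be handled softly: it requires a closing-lemma-type tool (Mañé's ergodic closing lemma, or Liao--Pliss selecting lemmas) to produce, near any nonwandering point, long finite orbit segments along which $Df$ genuinely contracts and expands, and then a careful dominated-splitting argument to assemble these pieces into a continuous invariant hyperbolic splitting over the whole nonwandering set. By contrast, the forward implication and the transversality bookkeeping are comparatively routine once the hyperbolic structure is available.
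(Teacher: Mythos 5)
First, a point of reference: the paper itself does not prove this statement --- it is quoted as background in Section~3 with the citation \cite{PT}, so there is no internal proof to compare against; your proposal has to be judged against the known argument of Pilyugin and Tikhomirov. Your outline does capture the correct architecture at its most important point: linearizing the Lipschitz shadowing hypothesis along an arbitrary orbit $p_k=f^k(p)$ to conclude that the operator $(\mathcal{L}v)_k=v_{k+1}-Df(p_k)v_k$ on bounded sequences is surjective with a right inverse whose norm is bounded uniformly over all orbits. That is indeed the heart of \cite{PT}. The genuine gap is in what you do next. Having obtained this Perron-type condition along \emph{every} orbit, you discard most of it, keep only periodic orbits, and then propose to recover hyperbolicity of the nonwandering set by ergodic-closing-lemma and selecting-lemma technology --- that is, you propose to re-prove a stability-conjecture-level theorem (essentially that star systems satisfy Axiom~A), and you acknowledge yourself that this ``cannot be handled softly.'' As written this is a pointer to a research program, not a proof, and it is also an unnecessary detour: uniform bounded solvability of the inhomogeneous linear equation along all orbits is precisely Ma\~n\'e's analytic condition, and the Maizel'--Pliss--Ma\~n\'e theorem (Ma\~n\'e, 1977) states that this condition already implies structural stability. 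The actual proof quotes that theorem; its new content is the delicate passage from Lipschitz shadowing to the uniform solvability itself (a finite-interval construction with pseudotrajectories built from approximate solutions of the linearized equation, followed by a limiting/compactness argument), which your single sentence ``apply Lipschitz shadowing and let $d\to 0$'' compresses to the point of hiding where the work is.

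On the easy direction, your plan of gluing local Anosov--Bowen shadowing along a Smale filtration using strong transversality also conceals real difficulties: pseudotrajectories may linger arbitrarily long in transition regions between basic sets, and making the gluing rigorous is essentially equivalent to the functional-analytic machinery you are trying to bypass. The standard proof (see \cite{Pil}, or the Robbin--Robinson theory it rests on) again goes through $\mathcal{L}$: structural stability yields a right inverse of $\mathcal{L}$ of uniformly bounded norm along every orbit, and a contraction-mapping (Newton--Kantorovich, as you say) argument converts this into Lipschitz shadowing with Lipschitz constant controlled by that norm. So the correct picture is that \emph{both} directions pass through the same analytic condition on $\mathcal{L}$, which is exactly why the equivalence holds; your sketch reaches this condition in the hard direction but then abandons it for a harder road.
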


For flows the situation with shadowing properties is more difficult. First of all, there is no canonical way to formalize the notion of a pseudotrajectory for a flow. We will use here the definitions offered by S.Yu. Pilyugin (see \cite{Pil}).

Let $\Phi$ be a flow on a compact smooth Riemannian manifold $M$. A $(d,T)$-\textit{psedotrajectory} of a flow $\Phi$ is a function $\Psi:M\mapsto\mathbb{R}$ such that
$$\mbox{dist}(\Psi(t+\tau),\Phi(\tau,\Psi(t)))\leq d\quad\forall |\tau|\leq T,\forall t\in\mathbb{R}.$$
Note that a function $\Psi$ is not assumed to be continuous.

Let $Rep$ be the class of all increasing homeomorphisms of $\mathbb{R}$. Put
$$Rep(\epsilon)=\left\{\alpha\in Rep\mid \left|\frac{\alpha(t) - \alpha(s)}{t-s}\right|\leq\epsilon\quad\forall t\neq s\right\}.$$

A flow $\Phi$ has \textit{oriented shadowing property} if for any $\epsilon>0$ there exists $d_0$ such that for any $(d,1)$-pseudotrajectory with $d\leq d_0$ there exist a point $q$ and a reparametrization $\alpha\in Rep$ such that
$$\mbox{dist}(\Psi(t),\Phi(\alpha(t), q))\leq\epsilon\quad\forall t\in\mathbb{R}.$$

It is necessary to use time reparametrizations because of possible existence of periodic trajectories. However if a flow has no periodic trajectories, but is good (e.g., is a Smale flow), then no reparametrizations are required.

A flow $\Phi$ has \textit{standard shadowing property} if for any $\epsilon>0$ there exists $d_0$ such that for any $(d,1)$-pseudotrajectory with $d\leq d_0$ there exist a point $q$ and a reparametrization $\alpha\in Rep(\epsilon)$ such that
$$\mbox{dist}(\Psi(t),\Phi(\alpha(t), q))\leq\epsilon\quad\forall t\in\mathbb{R}.$$
Standard shadowing property is not preserved via time changes. Similarly with the case of discrete time systems, Lipschitz version of standard shadowing property can be defined (when $d=\epsilon/L$, where $L$ is a global constant).

Shadowing lemma for flows was proved by S.Yu. Pilyugin and K. Palmer:
\begin{Theorem}[(Pilyugin)]
\label{Thfl}
A flow has Lipschitz shadowing property in a small neighborhood of a hyperbolic set.
\end{Theorem}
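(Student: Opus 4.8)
The plan is to follow the functional-analytic approach of Palmer: reduce the statement to an application of the contraction mapping principle in a Banach space of bounded functions along the pseudotrajectory, with the exponential dichotomy of the variational equation playing the role of the boundedly invertible linear part.

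First I would fix a small neighborhood $U$ of the hyperbolic set $\Lambda$ and recall the standard fact that along any trajectory --- or any sufficiently precise pseudotrajectory --- that stays in $U$, the variational equation $\dot v = DX(\Psi(t))\,v$ has an exponential dichotomy on $\R$, with stable and unstable ranks summing to $\dim M - 1$, the remaining one-dimensional ``center'' direction being spanned by the vector field itself, $X(\Psi(t))$. Working in a moving frame along $\Psi$, I would look for the shadowing trajectory in the form $s\mapsto \exp_{\Psi(h(s))}\bigl(w(s)\bigr)$, where $s$ is the (true) time of the sought trajectory $\Phi(\cdot,q)$, the section $w(s)$ is orthogonal to $X(\Psi(h(s)))$, and $h$ is an increasing reparametrization that will turn out to be essentially $\alpha^{-1}$.

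Next I would derive the fixed-point equation. Substituting this ansatz into $\dot x = X(x)$ and projecting onto the flow direction and its complement yields a coupled system: an equation that determines $\dot h$ from $w$, and an equation of the form $\dot w = A(t)w + \mathcal{N}(t,w) + e(t)$ for the transverse part, where $A$ is the variational matrix restricted to the non-center directions, $\mathcal{N}$ is the quadratic-and-higher remainder (Lipschitz in $w$ with small Lipschitz constant on a small ball, controlled by the diameter of $U$), and $\|e\|_\infty = O(d)$ is the error coming from the pseudotrajectory defect. Using the Green's function associated with the exponential dichotomy of $A(\cdot)$, I would rewrite this transverse equation as a fixed-point problem $w = \mathcal{G}\bigl(\mathcal{N}(\cdot,w) + e\bigr)$ on a small ball of a suitable space of bounded functions $\R\to\R^{\dim M - 1}$ (note that $\Psi$ need not be continuous, which only affects regularity bookkeeping). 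Since $\mathcal{G}$ has norm bounded by a constant $L_0$ depending only on the hyperbolicity data, and $\mathcal{N}$ contributes a factor that can be made $<1$ by shrinking $U$, the map is a contraction; its unique fixed point $w$ satisfies $\|w\|_\infty \le C L_0 d$. Recovering $h$ by integrating its equation, setting $\alpha = h^{-1}$, and taking $q$ to be the value of the constructed trajectory at time $0$, one obtains $\dist(\Psi(t),\Phi(\alpha(t),q)) \le L d$ with $L = CL_0$ --- exactly Lipschitz shadowing with $\epsilon = Ld$.

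The main obstacle, and the feature absent in the diffeomorphism case, is the center (flow) direction: the full linearized operator is not boundedly invertible, as it has $X(\Psi(\cdot))$ in kernel and cokernel, so one must simultaneously split off that direction and solve for the reparametrization $h$ from the center component. Carrying out this splitting with constants uniform over all admissible pseudotrajectories, and verifying that the resulting $\alpha$ is a genuine increasing homeomorphism of $\R$ close to the identity, is the delicate part; the rest is the routine check that $\mathcal{N}$ is a small Lipschitz perturbation and that $\mathcal{G}$ is bounded. A technically lighter but essentially equivalent alternative is to choose a uniform family of cross-sections transverse to the flow, pass to the induced pseudo-orbit of the (uniformly hyperbolic) Poincar\'e maps between consecutive sections, apply the discrete Anosov--Bowen shadowing lemma, and reassemble a flow trajectory together with a continuous reparametrization; there the difficulty reappears as the need to control return times and section maps uniformly.
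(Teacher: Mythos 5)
The paper does not prove this theorem: it is quoted as classical background in Section~3, with the proof deferred to the cited literature (Pilyugin's book \cite{Pil} and Palmer's work), so there is no internal argument to compare yours against. Your sketch is essentially the standard Palmer--Pilyugin proof from that reference --- exponential dichotomy of the variational equation along pseudotrajectories staying near $\Lambda$, splitting off the flow direction, and a contraction-mapping fixed point that produces both the shadowing trajectory and the reparametrization with Lipschitz constants depending only on the hyperbolicity data --- and it is correct as an outline; the one point to flag is that your ``codimension-one dichotomy plus flow direction'' frame presupposes $X(\Psi(t))\neq 0$, so rest points contained in $\Lambda$ (which the hyperbolicity definition used later in this paper, in Main Assumption for flows 4.b, explicitly allows) must be handled separately via the full-dimensional dichotomy, where no reparametrization is needed, exactly as in the cited source.
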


Palmer, Pilyugin, and Tikhomirov obtained the following result (see \cite{PPT}):
\begin{Theorem}[(Palmer, Pilyugin, Tikhomirov)]
Structural stability for flows is equivalent to Lipschitz shadowing property.
\end{Theorem}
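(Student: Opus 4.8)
The plan is to prove the two implications separately. For ``structural stability $\Rightarrow$ Lipschitz shadowing'' I would use the known description of structurally stable flows (by the theorems of Robbin, Robinson and Hayashi, a flow is structurally stable if and only if it satisfies Axiom~A together with the strong transversality condition). Axiom~A gives a hyperbolic non-wandering set, so Theorem~\ref{Thfl} already yields Lipschitz shadowing in a neighborhood of $\Omega(\Phi)$. It then remains to globalize: on a compact manifold a $(d,1)$-pseudotrajectory spends all but a bounded amount of time in a fixed neighborhood of $\Omega(\Phi)$, travelling from one basic set to another along pieces that trail an unstable manifold and then approach a stable manifold; the strong transversality condition makes these transition pieces shadowable with an error linear in $d$, and splicing the local shadowing orbits together with a controlled reparametrization produces a genuine Lipschitz bound. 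This is the flow version of the classical ``Axiom~A $+$ strong transversality $\Rightarrow$ Lipschitz shadowing'' argument for diffeomorphisms.

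For the converse --- the substantial direction --- a direct reduction to the diffeomorphism result cited above is not available, since a flow need not admit a global cross-section and may have equilibria, so I would argue variationally. Fix a trajectory $\gamma(t)=\Phi(t,p)$ with $p$ in the chain recurrent set, consider the variational equation $\dot v = DX(\gamma(t))v$, and pass to the linear Poincar\'e flow on the normal bundle by quotienting out the line $\mathbb{R}X(\gamma(t))$. The first key step is to show that nonlinear Lipschitz shadowing forces a Lipschitz-type solvability property for the inhomogeneous linear Poincar\'e equation along every such $\gamma$: for each bounded inhomogeneity there is a bounded solution with sup-norm at most $L$ times that of the inhomogeneity, with $L$ independent of $\gamma$. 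This is obtained by converting an approximate solution of the variational equation into an honest $(d,1)$-pseudotrajectory of $\Phi$ (following $\gamma$ to first order, the inhomogeneity created by a small perturbation), shadowing it, and extracting the linear estimate from the difference quotient as $d\to0$; the neutral flow direction is factored out precisely here, which is where the reparametrization in the definition of shadowing is used.

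The second key step is a Maizel'--Perron-type theorem: the solvability property just obtained, with uniform constant $L$, is equivalent to the linear Poincar\'e flow along $\gamma$ admitting an exponential dichotomy with constants depending only on $L$. Having such a uniform exponential dichotomy along every chain recurrent trajectory, I would invoke Ma\~n\'e's characterization (in its flow form) to conclude that $\Phi$ satisfies Axiom~A. Strong transversality then follows, once more, from Lipschitz shadowing: a nontransverse intersection of a stable and an unstable manifold yields a family of pseudotrajectories exhibiting a quadratic tangency, hence shadowable only with error of order $\sqrt d$, which contradicts the linear bound; therefore $W^s$ and $W^u$ are everywhere transverse. Axiom~A together with strong transversality gives structural stability, completing the proof.

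The main obstacle is this converse direction, and within it the passage from the nonlinear Lipschitz-shadowing hypothesis to a \emph{uniform} exponential dichotomy of the linear Poincar\'e flow on the chain recurrent set: one must control the time reparametrization and the neutral flow direction at once, keep every constant uniform in $\gamma$, and treat equilibria --- where the Poincar\'e construction degenerates --- as a separate case. Producing the right perturbations to realize a prescribed inhomogeneity, and the transversality/tangency estimate feeding Ma\~n\'e's theorem, are the other delicate points.
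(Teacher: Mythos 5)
The first thing to be clear about is that the paper does not prove this statement at all: it is quoted as a known background theorem, with the proof residing in the cited work \cite{PPT} (Palmer, Pilyugin, Tikhomirov, \emph{Lipschitz Shadowing for Flows}), so there is no in-paper argument to measure your proposal against. Judged against the actual literature, your outline reconstructs the right strategy for the substantial direction: derive from Lipschitz shadowing a uniform solvability property for the inhomogeneous linearized equations along trajectories (working transversally to the flow direction), pass through a Maizel'--Perron/Pliss-type theorem to get exponential dichotomies with uniform constants, and conclude via a Ma\~n\'e-type characterization; the easy direction (structural stability implies Lipschitz shadowing) is indeed classical and can be assembled from Theorem~\ref{Thfl} plus a globalization argument as in \cite{Pil}. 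One structural difference from the published proof is worth noting: there, Axiom~A and strong transversality are not obtained as two separate conclusions --- the dichotomies on the two half-lines together with the transversality of their ranges feed directly into Ma\~n\'e's analytic strong transversality condition, which is equivalent to structural stability --- so your separate step deducing strong transversality from a quadratic-tangency/$\sqrt d$ obstruction is not needed and, as stated, would itself require a careful proof that you have not supplied.

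That said, as a proof your text is a plan rather than an argument: every step where the real difficulty lies is named but not carried out. In particular, the passage from nonlinear Lipschitz shadowing to the uniform linear solvability estimate requires constructing, for an arbitrary prescribed bounded inhomogeneity, genuine $(d,1)$-pseudotrajectories whose shadowing orbits yield the linear bound in the limit $d\to 0$, while simultaneously controlling the reparametrization and the neutral flow direction and keeping all constants independent of the trajectory; the equilibrium case, where the linear Poincar\'e flow degenerates, must be handled separately; and the splicing argument in the forward direction near equilibria and between basic sets is itself delicate. Since none of this is executed, the proposal cannot be accepted as a proof of the theorem --- it is an accurate roadmap to the proof in \cite{PPT}, which is precisely why the paper cites that reference instead of proving the statement.
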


\section{Nonuniform shadowing.}

\subsection{Definitions and basic results.}

Let $M$ be a smooth compact $N$-dimensional Riemannian manifold with boundary $\partial M$. 
By Whitney theorem, we assume that $M$ is embedded into an Euclidean space of a sufficiently large dimension.
For any $x\in M$ define 
$$r(x) = \mbox{dist}(x,\partial M) = \min_{y\in\partial M}|x-y|.$$ 
Without of loss of generality, we assume that $M$ has diameter less than $1$. 
Consequently, $r(M)\subset [0,1]$.
Denote $\mbox{Int}(M) = M\backslash\partial M$.

A sequence $\{x_k\}_k\subset\mbox{Int}(M)$ is a \textit{nonuniform} $(n,\delta)$\textit{-pseudotrajectory} ($n\geq 1$) if 
\begin{equation}
\label{dprop}
|x_{k+1} - f(x_k)|\leq d(r(f(x_k))),\quad\forall k\geq 0,
\end{equation}
where 
\begin{equation}
\label{ddef}
d(z) = \delta z^n,\quad\forall z\in\mathbb{R}_{>0}.
\end{equation}
\begin{Remark}

1) Any $(n,\delta)$-pseudotrajectory is a $\delta$-pseudotrajectory (in the classical sense).

2) If we put $d(r(x_{k+1}))$ in \eqref{dprop} in the definition of an $(n,\delta)$-pseudotrajectory, then 
we obtain an equivalent definition. Note that $n\geq 1$.
\end{Remark}

We say that a diffeomorphism $f$ of $M$ has \textit{nonuniform shadowing property} with exponent $m\geq 0$ if
for any number $\Delta$ and the function 
\begin{equation}
\label{epsdef}
\epsilon(z) = \Delta z^m,\quad\forall z\in\mathbb{R}
\end{equation} 
there exist numbers $\delta_0$ and $n_0$ 
such that for any nonuniform $(n,\delta)$-pseudotrajectory with $\delta\leq\delta_0$ and $n\geq n_0$ 
there exists a point $q$ such that
\begin{equation}
\label{epsprop}
|x_k - f^k(q)|\leq\epsilon(r(f^k(q))),\quad\forall k\geq 0.
\end{equation}
\begin{Remark}

1) For $m=0$ and $n_0=0$ this property is standard shadowing property.

2) It is possible to put $\epsilon(r(x_k))$ instead of $\epsilon(r(f^k(q)))$ in \eqref{epsprop} in the previous definition, but it does not lead to an equivalent definition, generally speaking. However the previous definition seems more natural to us, and the definition remains equivalent if $m\geq 1$. 
\end{Remark}

For flows on $M$ this concept can be defined in the following way.
A (not necessarily continuous) function $\Psi:\mathbb{R}\mapsto \mbox{Int}(M)$ is a \textit{nonuniform} $(n,\delta,T)$\textit{-pseudotrajectory} if
for the function
$$\psi(t) := \max_{|\tau|\leq T}|\Psi(t+\tau) - \Phi(\tau,\Psi(t))|,$$
and the function
$d(\cdot)$ defined by \eqref{ddef}
the following holds:
\begin{equation}
\label{nudpstfl}
\psi(t)\leq \min_{|\tau|\leq T}d(r(\Phi(\tau,\Psi(t))))\quad\forall t\in\mathbb{R}_{>0}.
\end{equation}

A flow $\Phi$ has \textit{nonuniform shadowing property} with exponent $m$ if for any number $\Delta$ and the function $\epsilon(\cdot)$ defined by \eqref{epsdef} 
there exist numbers $\delta_0$ and $n_0$ such that for 
any nonuniform $(n,\delta,1)$-pseudotrajectory with $\delta\leq\delta_0$ and $n\geq n_0$ 
there exists a point $q$ such that
$$|\Psi(t) - \Phi(t,q)|\leq \max_{[t]\leq \tau\leq [t]+1}\epsilon(r(\Phi(\tau,q))),\quad\forall t\geq0,$$
where $[t]$ is the maximal integer number no more than $t$.

Note that oriented shadowing property is a nonuniform oriented shadowing property with exponent $0$.
We do not use reparametrizations in this property since in (very specific) situations that we will consider it is possible to choose the identity map as the reparametrization.

We use the following proposition:
\begin{Proposition}
\label{PropA}
Consider the time-one map $f$ for a flow $\Phi$. Suppose that $f$ has a nonuniform shadowing property with exponent $m$; then $\Phi$ has nonuniform shadowing property with exponent $m$.
\end{Proposition}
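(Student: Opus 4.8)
The plan is to deduce the continuous-time statement from the discrete one for $f$ by the classical device: restrict the flow pseudotrajectory to integer times, shadow the resulting sequence by an orbit $\{f^{k}(q)\}_{k\ge 0}$ of $f$ using the hypothesis, and then interpolate over each interval $[k,k+1)$ with the help of uniform Lipschitz continuity of the time-$\tau$ maps. First I make two harmless normalisations: I may assume $\Delta\le 1$ (a larger $\Delta$ only enlarges the admissible error, so the parameters produced for $\Delta=1$ still work), and I fix a common Lipschitz constant $L\ge 1$ for the maps $\Phi(\tau,\cdot)$ with $|\tau|\le 1$; such an $L$ exists because $M$ is compact and (by Whitney's theorem, as assumed in the paper) embedded in a Euclidean space, and $\Phi$ is $C^{1}$. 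Next I apply the hypothesis — nonuniform shadowing of $f$ with exponent $m$ — to the parameter $\Delta':=\Delta/(2L)$ and the function $\epsilon'(z)=\Delta' z^{m}$, obtaining numbers $\delta_{0}'$ and $n_{0}'$; finally I put $n_{0}:=\max\bigl(n_{0}',\lceil m\rceil\bigr)$ and $\delta_{0}:=\min\bigl(\delta_{0}',\,c(m)\,\Delta\bigr)$, where $c(m)>0$ is the small constant fixed in the estimate below.

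Now let $\Psi$ be a nonuniform $(n,\delta,1)$-pseudotrajectory of $\Phi$ with $\delta\le\delta_{0}$ and $n\ge n_{0}$, and set $x_{k}:=\Psi(k)$ for $k\ge 0$. Taking $\tau=1$ both in the definition of $\psi$ and in the minimum in \eqref{nudpstfl} gives
$$
|x_{k+1}-f(x_{k})|=|\Psi(k+1)-\Phi(1,\Psi(k))|\le\psi(k)\le d\bigl(r(\Phi(1,\Psi(k)))\bigr)=d\bigl(r(f(x_{k}))\bigr),
$$
so $\{x_{k}\}_{k\ge 0}\subset\mathrm{Int}(M)$ is a nonuniform $(n,\delta)$-pseudotrajectory of $f$ with the same $n$ and $\delta$. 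Since $\delta\le\delta_{0}'$ and $n\ge n_{0}'$, the hypothesis produces a point $q$ with
$$
|x_{k}-f^{k}(q)|\le\epsilon'\bigl(r(f^{k}(q))\bigr)=\tfrac{\Delta}{2L}\,r(f^{k}(q))^{m}\qquad\text{for all }k\ge 0 .
$$

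It remains to interpolate. Fix $t\ge 0$, write $k=[t]$ and $\tau=t-k\in[0,1)$, so that $\Phi(t,q)=\Phi(\tau,f^{k}(q))$, and estimate
$$
|\Psi(t)-\Phi(t,q)|\le|\Psi(k+\tau)-\Phi(\tau,\Psi(k))|+|\Phi(\tau,\Psi(k))-\Phi(\tau,f^{k}(q))|\le\psi(k)+L\,|x_{k}-f^{k}(q)| .
$$
The second term is at most $\tfrac{\Delta}{2}r(f^{k}(q))^{m}=\tfrac12\,\epsilon\bigl(r(\Phi(k,q))\bigr)\le\tfrac12\max_{k\le\sigma\le k+1}\epsilon\bigl(r(\Phi(\sigma,q))\bigr)$, since $z\mapsto z^{m}$ is nondecreasing and $k\in[k,k+1]$. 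For the first term I use $\psi(k)\le d(r(\Psi(k)))=\delta\,r(x_{k})^{n}$; combining $|r(x_{k})-r(f^{k}(q))|\le|x_{k}-f^{k}(q)|\le\tfrac{\Delta}{2L}r(f^{k}(q))^{m}$ with $r<1$, $n\ge m$, and a short case analysis according to the size of $r(f^{k}(q))$ (needed to absorb the discrepancy between $r(x_{k})$ and $r(f^{k}(q))$ inside the $n$-th power), one gets $\psi(k)\le\tfrac{\Delta}{2}r(f^{k}(q))^{m}\le\tfrac12\max_{k\le\sigma\le k+1}\epsilon\bigl(r(\Phi(\sigma,q))\bigr)$ as soon as $\delta_{0}$ is a small enough multiple of $\Delta$ depending only on $m$; this fixes $c(m)$. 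Adding the two estimates yields $|\Psi(t)-\Phi(t,q)|\le\max_{k\le\sigma\le k+1}\epsilon(r(\Phi(\sigma,q)))$, which is exactly the required inequality. (For $m=0$ this argument degenerates to the classical passage from time-one-map shadowing to flow shadowing.)

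The step I expect to be the main obstacle is the control of the jiggle term $\psi(k)$: one must compare the boundary distance $r$ at the pseudotrajectory point $\Psi(k)$ with its value at the shadowing point $f^{k}(q)$, and prevent this comparison from damaging the $n$-th power $r(x_{k})^{n}$ — this is precisely where the smallness of $\delta_{0}$ relative to $\Delta$, together with the freedom to take $n\ge m$ rather than merely $n\ge 1$, is consumed. Everything else is routine interpolation plus bookkeeping of the chain of quantifiers $\Delta\mapsto(\Delta',\delta_{0}',n_{0}')\mapsto(\delta_{0},n_{0})$.
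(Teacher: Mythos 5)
Your proof is correct and follows essentially the same route as the paper's: restrict $\Psi$ to integer times, shadow the resulting discrete nonuniform pseudotrajectory of $f$, and interpolate on each interval $[k,k+1]$ using a uniform Lipschitz bound for the time-$\tau$ maps. In fact you are more careful than the paper's own argument, which ends with an extra factor $(1+H)$ and glosses over the comparison of $r$ along the pseudotrajectory versus along the shadowing orbit, both of which you handle explicitly by pre-shrinking $\Delta$ and choosing $\delta_0,n_0$ accordingly.
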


\begin{proof}
Let $\Psi$ be a nonuniform $(\delta,n,1)$-pseudotrajectory. Consider the sequence 
$\xi=\{x_k\}_{k\in\mathbb{Z}} = \{\Psi(k)\}_{k\in\mathbb{Z}}$. We claim that $\xi$ is a $(\delta, n)$-pseudotrajectory. Indeed, 
$$|x_{k+1}-f(x_k)|\leq \psi(k)\leq d(r(f(x_k))).$$
Choose a point $q$ such that \eqref{epsprop} holds. Fix any $t\in [k,k+1]$. Define
$$H=\max_{x\in M, 0\leq \tau\leq 1} \left|\left|\frac{\partial\Phi}{\partial x}(\tau, x)\right|\right|.$$
Then, by \eqref{nudpstfl},
$$|\Psi(t) - \Phi(t,q)|\leq |\Psi(t) - \Phi(t-k,\Psi(k))| + |\Phi(t-k,\Psi(k)) - \Phi(t,q)|
\leq$$ 
$$\leq d(r(\Phi(t-k,\Psi(k)))) + H|\Psi(k) - \Phi(k,q)| \leq  d(r(\Phi(t-k,\Psi(k)))) + $$ 
$$ + H\epsilon(r(\Phi(k,q)))\leq (1+H)\max_{k\leq\tau\leq k+1}\epsilon(r(\Phi(\tau,q))).$$
\end{proof}

Now let us investigate how nonuniform shadowing property is preserved via the decompactification procedure.

\begin{Proposition}
\label{PropB}
Suppose that the compactified flow has nonuniform shadowing property with exponent $\bar{m}$ and numbers $\bar{\delta}$ and $\bar{n}_0$.

Then the initial flow has the following analog of nonuniform shadowing property, which we call \textit{noncompact nonuniform oriented shadowing property}:

There exists a time change $\alpha:\mathbb{R}\times \mathbb{R}^N\mapsto\mathbb{R}$ such that for any function 
$\epsilon(t) = \Delta |t|^{-2\bar{m}+3}$ there exist numbers $\delta_0$ and $n_0=2\bar{n}_0-3$ such that 
for any function
$d(t)=\delta|t|^{-n}$ with $\delta\leq\delta_0$ and $n\geq n_0$ 
and any function $\Psi$ (which we call a \textit{noncompact nonuniform $(\delta,n,1)$-pseudotrajectory}) such that
$$\max_{|\tau|\leq 1}|\Psi(t+\tau) - \Phi(\alpha(\tau,\Psi(t)),\Psi(t))|\leq \min_{|\tau|\leq 1}d(|\Phi(\alpha(\tau,\Psi(t)),\Psi(t))|)$$
there exists a point $q\in\mathbb{R}^{N}$ and a reparametrization $\alpha\in Rep$ 
such that
$$|\Psi(t) - \Phi(\alpha(t,q),q)|\leq \max_{\alpha([t],q)\leq \tau\leq \alpha([t]+1,q)}\epsilon(|\Phi(\tau,q)|).$$
\end{Proposition}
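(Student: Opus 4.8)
The time change $\alpha$ in the statement will be the one supplied by the Poincar\'e compactification itself: the positive smooth rescaling of time for which $\Theta(\Phi(\alpha(t,x),x))=\bar\Phi(t,\Theta(x))$, where $\bar\Phi$ denotes the compactified flow on $B_N$; for fixed $x$ the map $t\mapsto\alpha(t,x)$ is an increasing homeomorphism of $\mathbb{R}$, hence lies in $Rep$, and it will serve as the reparametrization in the conclusion. The whole proof is a transfer along $\Theta$, so the three ingredients from Section~2 that I will lean on are: (i) $r(\Theta(x))\asymp|x|^{-2}$ by \eqref{transferprime}, uniformly for $|x|$ bounded away from $0$; (ii) near a point $x$ of large norm, $\Theta$ and $\Theta^{-1}$ distort small distances by the factors $O(|x|^{-3})$ and $O(|x|^{3})$, by \eqref{transferalt} and \eqref{transfer}; and (iii) a distortion estimate along orbits of $\bar\Phi$: since the compactified field is $C^1$ and vanishes on $\partial B_N$ (normalizability), $\log r$ has uniformly bounded derivative along $\bar\Phi$-orbits, so over any compactified-time interval of length $1$ the quantities $r(\bar\Phi(\tau,\cdot))$ and $|\Phi(\alpha(\tau,\cdot),\cdot)|$ vary only by bounded multiplicative factors; in particular, inside all the estimates below, a $\max_{|\tau|\le1}$ of a fixed power of $|\Phi(\alpha(\tau,\cdot),\cdot)|$ can be replaced by its value at $\tau=0$ at the cost of a uniform constant. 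Given $\Delta$, I will set $\bar\Delta:=\Delta/C$ with $C$ the product of all the constants that appear, and let $\bar\delta,\bar n_0$ be the numbers the nonuniform shadowing property of $\bar\Phi$ (exponent $\bar m$) returns for the error $\bar\epsilon(z)=\bar\Delta z^{\bar m}$.

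\textbf{Forward step: transfer the pseudotrajectory to $B_N$.} Given a noncompact nonuniform $(\delta,n,1)$-pseudotrajectory $\Psi$, I would set $\bar\Psi:=\Theta\circ\Psi:\mathbb{R}\to\operatorname{Int}(B_N)$ and show it is a nonuniform $(\bar n_0,\bar\delta,1)$-pseudotrajectory of $\bar\Phi$. Write $P_\tau:=\Phi(\alpha(\tau,\Psi(t)),\Psi(t))$, so $\Theta(P_\tau)=\bar\Phi(\tau,\bar\Psi(t))$; the hypothesis on $\Psi$ reads $|\Psi(t+\tau)-P_\tau|\le\delta\min_{|\tau'|\le1}|P_{\tau'}|^{-n}$ for $|\tau|\le1$. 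Applying (ii) with centre $P_\tau$ (legitimate once $\delta_0$ is small, so the error is small relative to $|P_\tau|$) and then (iii) yields
$$\max_{|\tau|\le1}\bigl|\bar\Psi(t+\tau)-\bar\Phi(\tau,\bar\Psi(t))\bigr|=O\!\left(\delta\,|P_0|^{-n-3}\right),\qquad\min_{|\tau|\le1}\bar\delta\,r(\bar\Phi(\tau,\bar\Psi(t)))^{\bar n_0}\asymp\bar\delta\,|P_0|^{-2\bar n_0}.$$
The region $|x|\le R_0$ is compact and there both $\epsilon(\cdot)$ and $d(\cdot)$ are bounded below, so it is taken care of by the classical shadowing lemma and folded into the constants; thus I may assume $|P_0|\ge R_0\ge1$. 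Then $n\ge n_0:=2\bar n_0-3$ gives $-n-3\le-2\bar n_0$, hence $|P_0|^{-n-3}\le|P_0|^{-2\bar n_0}$, and a sufficiently small $\delta_0$ absorbs the implied constants, establishing the claim.

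\textbf{Backward step: shadow on $B_N$ and return.} The nonuniform shadowing of $\bar\Phi$ now gives $\bar q\in B_N$ with $|\bar\Psi(t)-\bar\Phi(t,\bar q)|\le\max_{[t]\le\tau\le[t]+1}\bar\epsilon(r(\bar\Phi(\tau,\bar q)))$ for $t\ge0$. I would observe that $\bar q\notin\partial B_N$: that set is $\bar\Phi$-invariant, so $\bar q\in\partial B_N$ would force $r(\bar\Phi(\tau,\bar q))\equiv0$, hence $\bar\epsilon\equiv0$ and $\bar\Psi(t)\equiv\bar\Phi(t,\bar q)\in\partial B_N$, contradicting $\bar\Psi(\mathbb{R})\subset\operatorname{Int}(B_N)$. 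Set $q:=\Theta^{-1}(\bar q)$; then $\bar\Phi(t,\bar q)=\Theta(\Phi(\alpha(t,q),q))$. Applying (ii) in the form \eqref{transfer} with centre $\Phi(\alpha(t,q),q)$ to the shadowing inequality, and using (i), (iii) to push the $\max$ over $[t]\le\tau\le[t]+1$ through the power $\bar m$, I get
$$\bigl|\Psi(t)-\Phi(\alpha(t,q),q)\bigr|=O\!\left(|\Phi(\alpha(t,q),q)|^{3}\cdot\bar\Delta\,|\Phi(\alpha(t,q),q)|^{-2\bar m}\right)=O\!\left(\bar\Delta\,|\Phi(\alpha(t,q),q)|^{-2\bar m+3}\right).$$
Since $\{\Phi(\tau,q):\alpha([t],q)\le\tau\le\alpha([t]+1,q)\}=\{\Phi(\alpha(s,q),q):[t]\le s\le[t]+1\}$ and $\epsilon(s)=\Delta|s|^{-2\bar m+3}$, the right-hand side is, after the constant $C$ has been divided out into $\bar\Delta$, at most $\max_{\alpha([t],q)\le\tau\le\alpha([t]+1,q)}\epsilon(|\Phi(\tau,q)|)$ --- the required estimate, with reparametrization $t\mapsto\alpha(t,q)$.

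\textbf{Main obstacle.} The scheme is essentially bookkeeping, and the hard part is making the bookkeeping honest. The transfer estimates \eqref{transfer}--\eqref{transferprime} are only asymptotic as $|x|\to\infty$, so one must carefully split off $|x|\le R_0$ and reconcile the classical shadowing used there with the global constants $\delta_0$, $n_0$. More seriously, the whole argument hinges on the distortion fact (iii): the three places where a $\max$ over a unit compactified-time interval is replaced by an endpoint value must genuinely cost only uniformly bounded constants, and this is exactly where normalizability of the original vector field --- equivalently, the vanishing of $\bar X$ on $\partial B_N$ --- is indispensable. Once every such constant is gathered into the single $C$ and $\bar\Delta,\delta_0$ are chosen accordingly, the exponent arithmetic $n_0=2\bar n_0-3$ and the exponent $-2\bar m+3$ come out precisely as in the statement.
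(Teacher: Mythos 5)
Your proposal is correct and follows essentially the same route as the paper: take $\alpha$ to be the (inverse of the) compactification time change, push the pseudotrajectory forward with $\Theta$ using the distortion estimates \eqref{transferprime}, \eqref{transfer}, \eqref{transferalt} (errors scale like $r^{3/2}\asymp|x|^{-3}$, giving $n/2+3/2\ge\bar n_0$, i.e. $n_0=2\bar n_0-3$), apply the compactified nonuniform shadowing, and pull the shadowing orbit back (errors scale like $r^{-3/2}\asymp|x|^{3}$, giving the exponent $-2\bar m+3$). Your extra bookkeeping (the bounded region, $\bar q\notin\partial B_N$, bounded distortion of $r$ over unit compactified-time intervals) only makes explicit constants and checks that the paper's terser proof absorbs into its omitted multiplicative constants.
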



In particular, if $\bar{m}\geq 3/2$, then the initial noncompactified flow has oriented shadowing property. Even for $\bar{m}<3/2$ we still have some sort of shadowing (despite our errors grow as the pseudotrajectory goes to infinity). Thus these shadowing properties even for $\bar{m}<3/2$ can be used to determine grow-up.

\begin{proof}
Let $\alpha$ be the inverse map to the time change used in the compactification procedure.
Note that, by \eqref{transferprime} and \eqref{transferalt},
$$|\Theta(\Psi(t+\tau)) - \bar{\Phi}(\tau,\Theta(\Psi(t)))|\leq
|\Psi(t+\tau) - \Phi(\alpha(\tau,\Psi(t)),\Psi(t))||r(\bar{\Phi}(\tau,\Theta(\Psi(t))))|^{3/2}\leq$$
$$\leq \delta|\Phi(t,q)|^{-n}|r(\bar{\Phi}(\tau,\Theta(\Psi(t))))|^{3/2}\leq
\delta |r(\bar{\Phi}(\tau,\Theta(\Psi(t))))|^{n/2+3/2}$$
(strictly speaking, since we have used asymptotic inequalities, we should have written a multiplicative constant $C_0$ in the right side of the previous equation; however, for simplicity, in such cases we omit such multiplicative constants).

Since $n/2+3/2\geq\bar{n}_0$, by assumption of the proposition, there exists a point $\Theta(q)$ such that
$$|\Theta(\Psi(t)) - \bar{\Phi}(t,\Theta(q))|\leq 
\Delta|r(\bar{\Phi}(t,\Theta(q)))|^{\bar{m}}.$$
Note that, by \eqref{transferprime} and \eqref{transfer},
$$|\Psi(t) - \Phi(\alpha(t,q),q)|\leq |\Theta(\Psi(t)) - \bar{\Phi}(\alpha(t,q),\Theta(q))||r(\bar{\Phi}(t,\Theta(q)))|^{-3/2}\leq$$
$$\leq \Delta\max_{[t]\leq \tau\leq [t]+1}|r(\bar{\Phi}(\tau,\Theta(q)))|^{\bar{m}-3/2}\leq
\Delta\max_{\alpha([t],q)\leq \tau\leq \alpha([t]+1,q)}|\Phi(\tau,q)|^{-2\bar{m}+3}.$$
\end{proof}

\subsection{Reasoning of Conley.}

Let $\{g_k:\mathbb{R}^N\mapsto \mathbb{R}^N\}$ be a sequence of diffeomorphisms.
It is possible to use a manifold instead of $\mathbb{R}^{N}$, since the reasoning is local.
We assume that this sequence of maps is hyperbolic on some compact locally maximal invariant set $\Lambda\subset \mathbb{R}^N$, and they locally preserve the foliation of the phase space on $\textbf{s}$-dimensional stable and $\textbf{u}$-dimensional unstable manifolds. 

Let $U(\epsilon_0,\Lambda)$ be a small neighborhood of $\Lambda$. 
Consider a continuous function $\delta(p)$ (later we impose additional restrictions on it).

Let $W_{2\delta(p)}^{s}(p)$ and $W_{2\delta(p)}^u(p)$ be $\textbf{s}$-dimensional and $\textbf{u}$-dimensional submanifolds in $\mathbb{R}^{N}$ of size $\delta(p)$ respectively (corresponding to stable and unstable manifolds at $p$).  
Consider the set $W_{2\delta(p)}(p)$, the neighborhood of the point $p$, and a map $\chi_{p}: W_{2\delta(p)}(p)\mapsto E_{2\delta(p)}=E_{2\delta(p)}^s\times E_{2\delta(p)}^u$, where $E_{2\delta(p)}$ is the standard cube, and stable manifolds are mapped to $s$-components and unstable manifolds are mapped to $u$-components.
Denote by $pr_s$ and $pr_u$ natural projections on $E_{2\delta(p)}^s$ along $E_{2\delta(p)}^{u}$ and on $E_{2\delta(p)}^{u}$ along $E_{2\delta(p)}^{s}$ respectively.
Fix $\{x_k\}_k\subset \mathbb{R}^N$ (in the applications $\{x_k\}_k$ will be a nonuniform pseudotrajectory).
Consider a sequence of neighborhoods $\{U(\delta(x_k),x_k)\}_{k}$.
Assume this neighborhoods are so small that 
\begin{equation}
\label{Udef}
U(\delta(x_k),x_{k})\subset W_{2\delta(x_{k})}(x_{k}),\quad
g_k(U(\delta(x_k),x_k))\subset W_{2\delta(x_{k+1})}(x_{k+1})\quad\forall k\geq0,
\end{equation} 
\begin{equation}
\label{stable}
pr_{s}\chi_{x_{k+1}}g_k(U(\delta(x_{k}),x_k)) \subset pr_s \chi_{x_{k+1}}U(\delta(x_{k+1}),x_{k+1})\quad\forall k\geq0,
\end{equation}
\begin{equation}
\label{unstable}
pr_{u}\chi_{x_{k+1}}g_k(U(\delta(x_k),x_{k})) \supset pr_u \chi_{x_{k+1}}U(\delta(x_{k+1}),x_{k+1})\quad\forall k\geq0.
\end{equation}
Clearly estimates $(\ref{stable})$ and $(\ref{unstable})$ hold, since $\delta(x_k)$ can be chosen (uniformly) sufficiently small, $\Lambda$ is a hyperbolic~set, and it is possible to consider as a new sequence of $\{g_k\}$ finite compositions $\left\{g_{T-1}\circ\ldots\circ g_0, g_{2T-1}\circ\ldots\circ g_{T},\cdots\right\}$, where $T$ is a sufficiently large number. Note that passage to such finite compositions preserves shadowing. 

\begin{figure}[h]
	\centering
\def\svgwidth{\columnwidth}
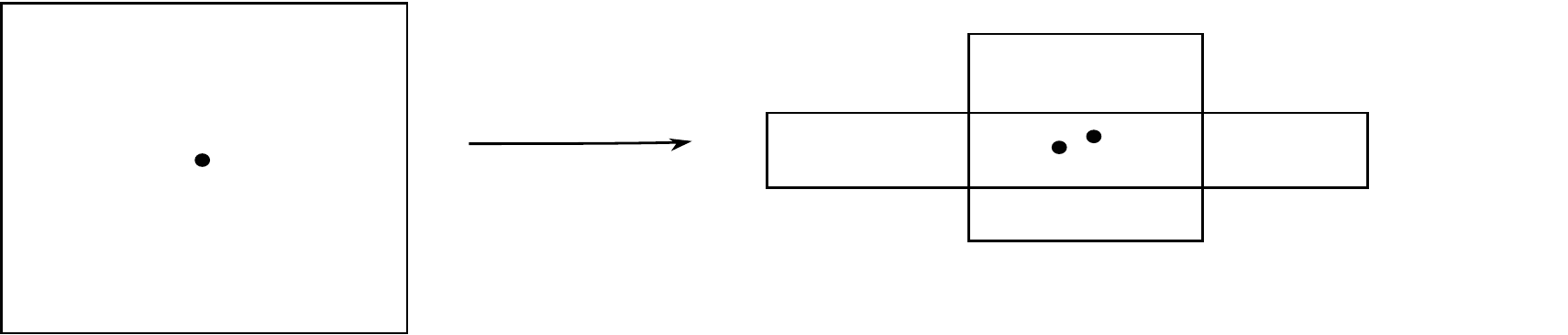
\caption{Mappings of squares.}
\label{Fig3}
\end{figure}

The dynamics of $\{g_k\}$ is depicted on Fig. \ref{Fig3} (vertical direction corresponds to contraction, and horizontal direction corresponds to expansion). 

Consider the map 
\begin{equation}
\label{hkdef}
h_{k} = \chi_{x_{k+1}}\circ g_k\circ \chi_{x_k}^{-1}.
\end{equation}
 Clearly the analogs of estimates $(\ref{stable})$ and $(\ref{unstable})$ hold for the maps $h_{k}$.

Consider the cube $E_{\delta(p)}$.
A \textit{horizontal} $\textbf{u}$\textit{-dimensional surface} is a surface $S\subset E_{\delta_0}$ such that $pr_{E^u}S = E^u$. A \textit{vertical }$\textbf{s}$\textit{-dimensional surface} is a surface $S\subset E_{\delta(p)}$ such that $pr_{E^s}S = E^{s}$.

We will need the following lemma. In essence, it was used without of proof in paper \cite{Con}. Paper \cite{PS} contains the proof of this lemma for two-dimensional case. In essence, we generalize this proof to the case of higher dimensions. 

\begin{Lemma}
\label{LK}
Let $g_k: M\mapsto M$ be a sequence of smooth maps,  
let $\{x_k\}_{k}$ and $\{\delta(x_k)\}_{k}$ be
such that relations $\eqref{Udef}$,
$(\ref{stable})$ and 
$(\ref{unstable})$ hold. 
Put $$\mbox{Inv}_+(\{x_k\}_{k\geq0},\{g_k\}):= \left\{ q\mid g_{k}\circ\ldots\circ g_0(q)\in U(\delta(x_{k+1}), x_{k+1}) \quad\forall k\geq 0\right\}.$$
Then the set $\chi_{x_0}\mbox{Inv}_+(\{x_k\}_{k\geq0},\{g_k\})$ contains a unique vertical $\textbf{s}$-dimensional surface.
\end{Lemma}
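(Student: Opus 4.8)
The plan is to mimic the classical graph-transform / Hadamard argument, adapted to the nonautonomous cube maps $h_k$ of \eqref{hkdef}, working entirely in the straightened coordinates where the sets $U(\delta(x_k),x_k)$ become cubes $E_{\delta(x_k)}=E^s_{\delta(x_k)}\times E^u_{\delta(x_k)}$ and the hyperbolicity of $\Lambda$ (after passing to the iterated compositions $g_{T-1}\circ\cdots\circ g_0$, etc., which is legitimate by the remark preceding the lemma) guarantees a cone condition: $h_k$ contracts $s$-directions and expands $u$-directions with definite rates, and relations \eqref{stable}, \eqref{unstable} say exactly that the image cube correctly covers the next cube in the $u$-direction while staying inside it in the $s$-direction. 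So first I would fix the function space: the set $\mathcal{G}$ of Lipschitz (or merely continuous, with a cone bound) vertical $\textbf{s}$-dimensional graphs over $E^s_{\delta(x_0)}$, i.e. maps $\sigma: E^s \to E^u$, equipped with the sup metric; this is a complete metric space.

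Next I would define the graph transform. Given a candidate description of the invariant set: a point $q$ lies in $\mathrm{Inv}_+$ iff $h_{k}\circ\cdots\circ h_0(\chi_{x_0}q)$ stays in $E_{\delta(x_{k+1})}$ for all $k$. The standard trick is to express the backward image of a vertical graph in $E_{\delta(x_{k+1})}$ under $h_k$ as again a vertical graph in $E_{\delta(x_k)}$ — this uses the covering relation \eqref{unstable} to solve the $u$-equation (an implicit-function / contraction argument in the $u$-variable, valid because $h_k$ expands in $u$) and \eqref{stable} to see that the resulting $s$-component lands back inside $E^s_{\delta(x_k)}$. Composing, the set $\{q : h_k\cdots h_0(\chi_{x_0}q)\in E_{\delta(x_{k+1})}\ \forall 0\le k\le m\}$ is, in $\chi_{x_0}$-coordinates, a vertical $\textbf{s}$-dimensional ``slab'' $S_m$ which is itself a graph thickened in the $u$-direction by an amount shrinking geometrically in $m$ (expansion rate $\lambda^{-m}$ in $u$). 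Then $\mathrm{Inv}_+ = \bigcap_m S_m$ is a nested intersection of such slabs whose $u$-widths tend to $0$, hence a single vertical $\textbf{s}$-dimensional surface; I would phrase this as: the backward graph-transform operators $\Gamma_m$ form a contraction on $\mathcal{G}$ (or a Cauchy sequence of graphs), and the limit graph is exactly $\chi_{x_0}\mathrm{Inv}_+$. Uniqueness follows because any vertical surface contained in $\chi_{x_0}\mathrm{Inv}_+$ must be pointwise fixed under all $\Gamma_m$, and the $u$-widths forcing gives at most one such graph; equivalently two vertical surfaces in the invariant set would have points with the same $s$-coordinate whose forward orbits both stay bounded, contradicting $u$-expansion.

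The main obstacle I expect is the higher-dimensional bookkeeping that the authors explicitly flag — in the $2$-dimensional case of \cite{PS} ``vertical'' and ``horizontal'' are genuinely $1$-dimensional and the covering/solving step is a one-variable intermediate-value argument, whereas here one must (i) make precise the cone condition on the nonautonomous, only-locally-linearized maps $h_k$ so that the $u$-equation is genuinely invertible with uniform expansion, (ii) check that a vertical $\textbf{s}$-dimensional surface (a graph over the full $E^s$) is carried to a vertical $\textbf{s}$-dimensional surface rather than to something that fails to project onto all of $E^s$ — this is where \eqref{stable} and \eqref{unstable} must be used in tandem and where the choice of large $T$ (strong hyperbolicity after iteration) is essential — and (iii) track that the graphs remain uniformly Lipschitz so the limit is again an admissible surface. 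I would handle (i)–(iii) by first reducing, via $\chi_{x_k}$ and the iterated compositions, to maps that are $C^0$-close to a uniformly hyperbolic linear cocycle, proving the graph-transform step for the linear model and then perturbing; the rest (completeness of $\mathcal{G}$, geometric decay of $u$-widths, uniqueness) is then routine.
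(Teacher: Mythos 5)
Your route is genuinely different from the paper's: the paper proves the lemma by a purely topological argument (divide $E_{\delta(x_0)}$ into dyadic cubes of size $1/2^n$, build the family $J_n$ of cubes that are connected to $B=E^s_{\delta(x_0)}\times\partial E^u_{\delta(x_0)}$ by horizontal $\textbf{u}$-dimensional surfaces and miss the invariant set, and extract a vertical $\textbf{s}$-dimensional surface from its complement/boundary, generalizing the two-dimensional argument of \cite{PS}), and for the existence part it uses nothing beyond the inclusion relations \eqref{Udef}, \eqref{stable}, \eqref{unstable}. Your graph-transform argument needs strictly more than those hypotheses: both the step where you solve the $u$-equation by an implicit-function/contraction argument and the claimed geometric decay of the $u$-widths of the slabs $S_m$ require that the maps $h_k$ of \eqref{hkdef} expand the $u$-directions at a definite rate. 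The stated hypotheses are mere inclusions and do not supply any rate (for instance $g_k=\mathrm{id}$ with constant $x_k$ and $\delta(x_k)$ satisfies \eqref{Udef}, \eqref{stable}, \eqref{unstable}, and there the contraction step and the decay of widths fail outright), so as written your argument proves a different lemma with added quantitative hypotheses.

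The more serious problem is that the added hypothesis is not available where the paper actually invokes the lemma. In the proof of Item 1) of Theorem \ref{ThM} the lemma is applied with $\textbf{s}=0$ and the ``quasi-unstable'' space equal to all of $\mathbb{R}^N$, although under Assumption 4.a) the map may contract every tangent direction (think of $\Lambda$ an attracting hyperbolic fixed point on $\partial M$): there the covering relation \eqref{claimup} comes not from any expansion of $f$ but from the fact that the cube sizes $\delta(x_k)=\epsilon(|\theta(x_k)^{(1)}|C)$ shrink faster than the map contracts, which is exactly condition \eqref{muineq}. So your premise that ``hyperbolicity of $\Lambda$ guarantees a cone condition with $u$-expansion of $h_k$'' is false in that application, and your reduction to a small perturbation of a uniformly hyperbolic linear cocycle does not go through in fixed coordinates. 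To salvage the graph-transform approach you would have to renormalize each cube to unit size and prove that the rescaled maps $\tilde h_k(y)=h_k(\delta(x_k)\,y)/\delta(x_{k+1})$ form a uniformly expanding (respectively hyperbolic) nonautonomous sequence, with the expansion constant coming from \eqref{muineq} (respectively \eqref{muineq2}) and the choice of $\delta(x_k)$, not from hyperbolicity of $\Lambda$ alone; this renormalization step is absent from your proposal, and it is precisely what the paper's topological formulation of the lemma is designed to avoid. (Uniqueness, in both your argument and the paper's, does require such strictness beyond the bare inclusions, but existence --- the main content --- should not, and in your write-up it does.)
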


\begin{proof}
1) Consider the maps $h_{k}: E_{\delta(x_k)}\mapsto E_{\delta(x_{k+1})}$ defined by \eqref{hkdef}. Note that $\mbox{Inv}_+(\{0\}_{k\geq0},\{h_{k}\}_{k\geq0})=\chi_{x_0}\mbox{Inv}_+(\{x_k\}_{k\geq0},\{g_k\}_{k\geq0})$,
where $$\mbox{Inv}_+(\{0\}_{k\geq0},\{h_{k}\}_{k\geq0}) = \{q\mid h_{k}\circ\ldots\circ h_{0}(q)\in E_{\delta(x_{k+1})}(x_{k+1})\}.$$ 
In order to prove Lemma \ref{LK}, it is sufficient to prove that the set 
$\mbox{Inv}_+(\{0\}_{k\geq0},\{h_{k}\}_{k\geq0})$ contains a vertical $\textbf{s}$-dimensional surface.

2) The case of $\textbf{s}=N$ is trivial; that is why we do not consider it in details. In this case 
$\mbox{Inv}_+(\{0\}_{k\geq0},\{h_{k}\}_{k\geq0}) = E_{\delta(x_0)}(x_0)$.

3) We divide $E_{\delta(x_0)}(x_0)$ on cubes of size $1/2^n$. Denote by $\mbox{Inv}_n$ the set of all cubes that 
intersect $\mbox{Inv}_+(\{0\}_{k\geq0},\{h_{k}\}_{k\geq0})$.
By \eqref{unstable}, it is sufficient to prove that for any $n$ the set $\mbox{Inv}_n$ contains a vertical $\textbf{s}$-dimensional surface (since a limit of vertical $\textbf{s}$-dimensional surfaces as $n\rightarrow+\infty$ is a vertical $\textbf{s}$-dimensional surface).

4) Consider the set $B = E^s_{\delta(x_0)}(x_0)\times\partial E^u_{\delta(x_0)}(x_0)$. 
Let $J_n$ be the set of all cubes $C$ 
such that the following holds:
\begin{itemize}
	\item any cube $C\subset J_n$ can be connected with $B$ by a horizontal $\textbf{u}$-dimensional surface contained in $J_n$,
	\item any cube $C\subset J_n$ does not intersect $\mbox{Inv}_+(\{0\}_{k\geq0},\{h_{k}\}_{k\geq0})$. 
\end{itemize}
Due to $(\ref{stable})$, $J_n$ is not empty if $n$ is sufficiently large (and contains all cubes adjacent to $B$). 

5) Note that the set $\partial J_n$ contains $E^s_{\delta(x_0)}(x_0)\times\partial E^u_{\delta(x_0)}(x_0)$. Moreover, $J_n$ is homeomorphic to the direct product of $E^s_{\delta(x_0)}(x_0)\times\partial E^u_{\delta(x_0)}(x_0)$ and some other set. Consequently, $B$ contains a vertical $\textbf{s}$-dimensional surface. Its uniqueness follows from \eqref{unstable}. We have proved our lemma.
\end{proof}


\subsection{Shadowing lemma for nonuniform shadowing, case of diffeomorphisms.}

Let $f$ be a diffeomorphism of an $N$-dimensional manifold $M$ with the boundary~$\partial M$.
Note that, since we are interested in criteria for shadowing, it is possible to consider $f^{T}$ instead of $f$, where $T$ is a sufficiently large number. It allows to simplify hyperbolicity estimates.

\textbf{Setting.}  
Assume that $f$ has a locally maximal compact invariant set $\Lambda\subset\partial M$ and the following holds (we assume either 4.a) or 4.b)):

\textbf{Main assumption.} For any point $p\in\Lambda$ there exists a one-dimensional subspace $\ell(p)\subset T_pM$ such that: 

0) $\ell(p)\notin T_{p}\partial M$;

1) $\ell(f(p)) = Df(p)\ell(p)$; 

2) $\ell(p)$ continuously depends on $p$; 

3) for any $v\in\ell(p)$ $\mu_1|v|\leq |Df(p)v|\leq\mu_2|v|$, and either $\mu_2<1$ or $\mu_1>1$ (hereinafter we consider only the case $\mu_2<1$, since the other case is completely similar, and we just get shadowing for negative indices instead of positive indices)

4.a) (if $\mu_2<1$) we choose $\lambda^s_{min}$ such that 
$|Df(p)v|\geq \lambda^{s}_{min}|v|$ for any $v\in\nobreak T_{p}M$

OR

4.b) the set $\Lambda$ is hyperbolic for $f$, hyperbolicity is controlled by constants
$\lambda^s_{min}<\lambda^s_{max}<1$ for the case of the stable space and $1<\lambda^u_{min}<\lambda^u_{max}$ for the case of the unstable space. 

\begin{Theorem} 
\label{ThM}
Suppose that Main assumption holds. Let $U$ be a sufficiently small neighborhood of $\Lambda$ such that the analogues of estimates from Main Assumption hold in it.

1) Assume that $\mu_2<1$ and Item 4.a holds. 
Suppose that
\begin{equation}
\label{muineq}
\mu_2^m < \lambda^s_{min},\quad m>\ln \lambda^s_{min}/\ln\mu_2,
\end{equation}
then $f$ has one-sided nonuniform shadowing property in $U$ with the exponent $m$ (which is Lipshitz if $m\geq 1$). If a pseudotrajectory is fully contained in $U$, then the point $q$ from the definition of nonuniform shadowing property is unique. If only a finite part of a pseudotrajectory is contained in $U$, then the set of points~$q$ such that the analog of $\eqref{epsprop}$ holds is a small ball.

2) Suppose that $\mu_2<1$, Item 4.b) holds, and
\begin{equation}
\label{muineq2}
\mu_1^m > \lambda^{s}_{max},\quad m<\ln \lambda^{s}_{max}/\ln\mu_1,
\end{equation}
then $f$ has  one-sided nonuniform shadowing property in $U$ with the exponent~$m$ (which is Lipschitz if $m\geq 1$). 
 If a pseudotrajectory is fully contained in $U$, then the set of points $q$ such that the analog of $(\ref{epsprop})$ holds is an $s$-dimensional disk $D_s$. If only finite part of a pseudotrajectory is contained in $U$, then the set of points~$q$ such that the analog of $(\ref{epsprop})$ holds is a small neighborhood of an $s$-dimensional disk $D_s$.
 


\end{Theorem}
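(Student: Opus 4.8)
The plan is to run the Conley box argument of Lemma~\ref{LK} along the given nonuniform pseudotrajectory $\{x_k\}_{k\ge0}$, with boxes whose sizes shrink at the rate dictated by the allowed error $\epsilon(r(\cdot))=\Delta\,r(\cdot)^m$ of \eqref{epsdef}. First I would replace $f$ by $f^T$ with $T$ large (permitted by the remark preceding the Setting) and shrink $U$, so that inside $U$ the splitting $T_xM=\ell(x)\oplus G(x)$ and all rate inequalities of the Main Assumption hold with constants as close as we like to their limiting values over $\Lambda$. Since $\Lambda\subset\partial M$, every point of $U$ has $r(\cdot)\le\rho_0$ for some small $\rho_0$, so the ``near $\Lambda$'' estimates hold all along the pseudotrajectory. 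Because $f$ is a diffeomorphism of a manifold with boundary, $\partial M$ is invariant, and the induced action of $Df$ on the normal line bundle $TM/T\partial M$ over $\Lambda$ has the same rate as $\ell$ (both $\ell$ and this normal bundle being $Df$-invariant complements to $T\partial M$), hence in $[\mu_1,\mu_2]$; as $r$ is governed by this boundary-normal direction, one gets $r(x_{k+1})=\nu_k\,r(x_k)$ along $\{x_k\}$ with $\nu_k\in[\mu_1,\mu_2]$ (comparable to the $\ell$-rate on the $k$-th block), the pseudotrajectory error $|x_{k+1}-f(x_k)|\le\delta\,r(f(x_k))^n$ being negligible against $r(x_k)$ once $n\ge1$ and $\delta$ is small. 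Thus $r(x_k)$ decays geometrically, and $\{x_k\}$ is in particular a genuine $\delta$-pseudotrajectory.

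Around each $x_k$ I would put a Conley box $U(\delta(x_k),x_k)$ with $\delta(x_k)$ a small fixed multiple of $\Delta\,r(x_k)^m$, and split the chart cube as $E^s\times E^u$. In case~4.a, where only the crude bound~4.a is available, take $\mathbf s=0$ and $\mathbf u=N$, so that \eqref{stable} is empty and \eqref{unstable} reads $g_k\big(U(\delta(x_k),x_k)\big)\supset U(\delta(x_{k+1}),x_{k+1})$; per block the box shrinks by at most $\mu_2^{\,m}$ (as $\delta(x_{k+1})/\delta(x_k)=\nu_k^{\,m}\le\mu_2^{\,m}$) while $Df^T$ expands every vector by at least $\lambda^s_{min}$, so this covering holds precisely because $\mu_2^{\,m}<\lambda^s_{min}$ from \eqref{muineq} (which forces $m>1$, whence the Lipschitz form). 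In case~4.b, take $\mathbf s=\dim E^s$ (which contains $\ell$) and $\mathbf u=\dim E^u$: along the unstable directions \eqref{unstable} is automatic since $\lambda^u_{min}>1>\mu_2^{\,m}\ge\nu_k^{\,m}$, while along the stable directions \eqref{stable} asks the fastest stable contraction $\lambda^s_{max}$ to be beaten by the slowest per-block box ratio $\mu_1^{\,m}$, which is exactly $\mu_1^{\,m}>\lambda^s_{max}$ from \eqref{muineq2}; the boundary-normal $\ell$-direction, being genuinely contracting, cannot be shrunk faster than its own rate, so its box size is taken $\asymp r(x_k)^{\gamma}$ with $\gamma$ any admissible value ($\mu_2<\mu_1^{\gamma}$). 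In all cases one must also verify that the accumulated displacement $|f^T(x_k)-x_{k+1}|\le C_T\,\delta\,r(x_k)^n$ is swallowed by the slack --- a fixed fraction of $\delta(x_k)\asymp r(x_k)^m$ --- of the covering relations; as $r(x_k)\le\rho_0<1$, this holds once $n\ge n_0$ (one may take $n_0=\max(1,m)$, so $r(x_k)^{\,n-m}\le1$) and $\delta\le\delta_0$ for some $\delta_0$ proportional to $\Delta$.

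Lemma~\ref{LK} then yields a unique vertical $\mathbf s$-dimensional surface in $\mathrm{Inv}_+(\{x_k\}_{k\ge0},\{g_k\})$, and every $q$ on it satisfies $f^k(q)\in U(\delta(x_k),x_k)$, i.e.\ $|x_k-f^k(q)|\le\delta(x_k)\lesssim\Delta\,r(x_k)^m$. When $m\ge1$ this is $\le\Delta\,r(x_k)$, so $r(f^k(q))\asymp r(x_k)$ by the triangle inequality and, after adjusting the proportionality constant in $\delta(x_k)$, one obtains $|x_k-f^k(q)|\le\Delta\,r(f^k(q))^m=\epsilon(r(f^k(q)))$, which is \eqref{epsprop}; the passage from $r(x_k)$ to $r(f^k(q))$ is exactly where $m\ge1$ is used (cf.\ the second Remark after \eqref{epsprop}), and for $0\le m<1$ (which occurs only in case~4.b) one argues in the ambient Euclidean space --- using that $f$ and $\Lambda$ extend past $\partial M$ --- capping $\delta(x_k)$ at $\tfrac12 r(x_k)$ away from a neighborhood of $\partial M$ while keeping the $r^m$-size where it is needed, at the cost of a nonlinear $\delta_0(\Delta)$. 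For the structure claims: in case~4.a ($\mathbf s=0$) the surface is a single point, and if only $x_0,\dots,x_K$ lie in $U$ the admissible set is the finite intersection of box preimages, a small ball; in case~4.b the surface is an $\mathbf s$-disk $D_s$, it is all of $\mathrm{Inv}_+$ because expansion along $E^u$ together with $\delta(x_k)\to0$ forbids two admissible points to differ in the unstable direction, and a finite segment in $U$ yields a small neighborhood of $D_s$.

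The main obstacle is the second step: making the \emph{variable} box sizes $\asymp r(x_k)^m$ mesh with the genuinely non-autonomous, not exactly split dynamics along the pseudotrajectory --- in particular establishing rigorously that it is the boundary-normal rate that governs $r\circ f$, so that the per-block box ratios $\mu_1^{\,m},\dots,\mu_2^{\,m}$ are compared against the \emph{right} contraction/expansion rates, while simultaneously dominating the accumulated pseudotrajectory displacement by the shrinking slack. Passing to $f^T$ and shrinking $U$ are what make the constants cooperate; once \eqref{stable} and \eqref{unstable} are in place, Lemma~\ref{LK} does the rest, the only further subtleties being the boundary bookkeeping for $0\le m<1$ and the identification of the admissible set with exactly the disk $D_s$ in case~4.b, which uses the genuine hyperbolic structure beyond what Lemma~\ref{LK} provides.
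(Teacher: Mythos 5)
Your proposal is correct and follows essentially the same route as the paper's proof: establish that along a sufficiently precise nonuniform pseudotrajectory the distance to $\partial M$ changes monotonically at a per-step rate pinched between $\mu_1$ and $\mu_2$ (the paper does this in adapted boundary charts, estimate \eqref{restim}), then run Conley's box method with boxes of size comparable to $\epsilon(r(x_k))\asymp r(x_k)^m$, verifying the covering relations \eqref{stable}--\eqref{unstable} with $\mathbf{s}=0,\ \mathbf{u}=N$ in case 4.a (using $\mu_2^m<\lambda^s_{min}$) and with the hyperbolic splitting in case 4.b (unstable inclusion automatic, stable inclusion from $\mu_1^m>\lambda^s_{max}$), absorbing the pseudotrajectory error into the slack via $d\le\epsilon/L$, and finally invoking Lemma \ref{LK} to get the unique shadowing point, the disk $D_s$, and the finite-segment variants. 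The only differences are cosmetic (your normal-bundle justification of the rate of $r\circ f$ versus the paper's chart computation, and your optional separate box exponent in the $\ell$-direction, which is unnecessary since $\ell$ lies in the stable space), so the argument matches the paper's.
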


\begin{Remark}

1) The simplest application of the theorem is when $\Lambda$ is a hyperbolic fixed point. In this case Main Assumption holds and the conditions of Theorem are naturally formulated in terms of eigenvalues of the corresponding matrix.

2) Note that condition \eqref{muineq} implies that $m>1$, and condition \eqref{muineq2} implies that $0<m<1$.

3) 
It is possible to give a more refined, stronger, and more technical version of the theorem using the theorem about filtrations (see~\cite{Pes}). 

\end{Remark}

\begin{proof} 

\textbf{Step 1.} Introduction of the coordinates. Here we consider only case~4.b) (case 4.a) is easier and is treated similarly).


We will define the coordinates in some fixed small neighborhood of the boundary $U(\epsilon_0,\partial M)$. 
We introduce a finite (but large number) of coordinate charts. We denote by $\epsilon_1$ maximal diameter of the charts.  In any of the coordinate charts the boundary is mapped to a hyperplane. This hyperplane contains center of the coordinate chart. We denote by $\theta(x)$ the coordinate of $x\in M$ in one of the charts. Any chart is the set of points $\{|\theta(x)|\leq 1\}$.
We represent $\theta(x)$ as $(\theta(x)^{(1)},\ldots, \theta(x)^{(s+u)})$, where $\theta(x)^{\{s\}} = (\theta(x)^{(1)},\ldots, \theta(x)^{(s)})$ correspond to the stable space coordinates and $\theta(x)^{\{u\}} = (\theta(x)^{(s+1)},\ldots, \theta(x)^{(u)})$ correspond to the unstable space coordinates, and $\theta(x)^{(1)}$ corresponds to the direction orthogonal to the boundary (we assume that $\epsilon_1$ is small, hence, it is possible to choose the stable and unstable coordinates and the transversal to the boundary coordinate uniformly in the coordinate system centered at $x$).
Denote by $C$ the Lipschitz constant of the coordinate maps $\theta$ and their inverses $\theta^{-1}$. 
Note that since  we can assume that $C$ is sufficiently close to $1$,
\begin{equation}
\label{Crestr}
C^{2m}\mu_2^m<\lambda_{min}^s
\end{equation}
in case 1) and similar inequalities in other cases.
 Besides, we assume that the coordinates are chosen such that if two points $z_1$ and $z_2$ are contained in one chart, then
$$g(\theta(z_2)) = g(\theta(z_1)) + A(z_1)(\theta(z_2) - \theta(z_1)) + \phi(z_1)(\theta(z_2)-\theta(z_1)),\quad \phi(z_1)(z) = o(z)$$
(where $g$ describes the dynamics in the coordinates).

In particular, the coordinates are constructed in such a way that the transversal to the boundary tangent direction is orthogonal to the boundary.
Besides, we assume that the coordinate charts are monotonous in the following sense. Suppose that two points $z_1$ and $z_2$ are contained in an image of some coordinate chart; then
$$\mbox{dist}(z_1,\partial M) < \mbox{dist}(z_2,\partial M) \quad\Longleftrightarrow\quad
\theta(z_1)^{(1)} < \theta(z_2)^{(1)}.$$

Let $\{x_k\}_{k\geq0}$ be a nonuniform pseudotrajectory with $n\geq 1$. 
Suppose that a point $p$ is $\epsilon_1$-close to $x_k$ and a point $f(p)$ is $\epsilon_1$-close to $f(x_k)$; then
using Main assumption we conclude that in the coordinates that contain both $f(p)$~and~$f(x_k)$

$$\theta(f(x_k))^{(1)} = \theta(f(p))^{(1)} + \mu(x_k) \theta(p)^{(1)} + \phi(x_k)^{1}(\theta(p)),$$
\begin{equation}
\label{coordrelSt}
\theta(f(x_k))^{\{s\}} = \theta(f(p))^{(s)} + A(x_k)^{s}\theta(p)^{\{s\}} + \phi(x_k)^{s}(\theta(p)),
\end{equation}
\begin{equation}
\label{coordrelUnst}
\theta(f(x_k))^{\{u\}} = \theta(f(p))^{(u)} + A(x_k)^{u}\theta(p)^{\{u\}} + \phi(x_k)^{u}(\theta(p)).
\end{equation}
Assumptions of the theorem imply the corresponding estimates on the products of $|\mu(x_k)|$, $|A(x_k)^s|$, and $|A(x_k)^u|$.

By invariance of the direction towards the boundary and of the stable and unstable spaces,
 $\phi(x_k)^1(\theta(p)) = o(\theta(p)^{(1)})$, $\phi(x_k)^{s}(\theta(p)) = o(\theta(p)^{\{s\}})$, $\phi(x_k)^{u}(y_k) = o(\theta(p)^{\{u\}})$.

Let $\Delta_0$ be a sufficiently small number (if necessary we decrease $\epsilon_1$) such that locally in each chart for $\phi(x_k)(\theta(p))$ we have
\begin{equation}
\label{phi1}
|\phi(x_k)(\theta(p))|\leq\Delta_0|\theta(p)|,\quad|\phi(x_k)^1(\theta(p))|\leq \Delta_0|\theta(p)^{(1)}|,
\end{equation}
\begin{equation}
\label{phi2}
|\phi(x_k)^{s}(\theta(p))|\leq\Delta_0|\theta(p)^{\{s\}}|,\quad |\phi(x_k)^{u}(\theta(p))|\leq\Delta_0|\theta(p)^{\{u\}}|.
\end{equation}

Note that these formulas imply monotonicity of sufficiently precise nonuniform pseudotrajectories with respect to the boundary (for $n\geq 1$). We consider only the case of $\mu_2<1$ (the case of $\mu_1>1$ is similar).
Observe that 
$$|\theta(x_{k+1})^{(1)}|\leq \mu_2 |\theta(x_k)^{(1)}| + o(|\theta(x_k)^{(1)}|) + d((f(x_k))^{(1)}C) \leq$$ 
$$\leq\mu_2 |\theta(x_k)^{(1)}| + o(|\theta(x_k)^{(1)}|) + \delta(\mu_2 C |\theta(x_k)^{(1)}| + o(|\theta(x_k)^{(1)}|))^{n},$$
$$|\theta(x_{k+1})^{(1)}|\geq \mu_1 |\theta(x_k)^{(1)}| + o(|\theta(x_k)^{(1)}|) - \delta(\mu_1 C |\theta(x_k)^{(1)}| + o(|\theta(x_k)^{(1)}|))^{n}$$
(we suppose that $d(\cdot)$ satisfies \eqref{ddef}). 
Moreover, (since, by \eqref{Crestr}, $C\mu_2<1$, and $\delta$ can be chosen so small that $2\delta\mu_2 C\leq \Delta_0$) it follows from $(\ref{phi1})$ that
\begin{equation}
\label{restim}
(\mu_1-2\Delta_0) |\theta(x_k)^{(1)}| \leq |\theta(x_{k+1})^{(1)}| \leq (\mu_2+2\Delta_0) |\theta(x_k)^{(1)}|. 
\end{equation}
By decreasing (if necessary) $\epsilon_1$ (and, consequently, $\Delta_0$ too), we assume that $$\Delta_0 < \min(|1-\mu_1|/4,|1-\mu_2|/4).$$

By the choice of coordinate charts,
estimates $(\ref{restim})$ imply that
$$\mbox{dist}(x_{k+1},\partial B_N) < \mbox{dist}(x_k,\partial B_N)$$
if $\mu_2<1$ and  
$$\mbox{dist}(x_{k+1},\partial B_N) > \mbox{dist}(x_k,\partial B_N)$$
if $\mu_1>1$.

Note that we got monotonicity for $n\geq 1$.
Generally speaking, for $0<n<1$ monotonicity does not hold.
That is why we require $n\geq1$ even if $0<m<1$.

Hereinafter, we assume that $d(\cdot)\leq \epsilon(\cdot)/L$ for sufficiently large $L$ (if $m\geq1$ it is sufficient to take $d(\cdot)=\epsilon(\cdot)/L$).


Without loss of generality we assume that $\delta$ is sufficiently small such that
for any two points $q_1$ and $q_2$ (in one of the coordinate charts) that are $\delta$-close
\begin{equation}
\label{zdef1}
d_H(pr_u\chi_{q_1}U(q_1,\epsilon_1),pr_u\chi_{q_2}U(q_2,\epsilon_1))\leq 1/L,
\end{equation}
\begin{equation}
\label{zdef2}
d_H(pr_s\chi_{q_1}U(q_1,\epsilon_1),pr_s\chi_{q_2}U(q_2,\epsilon_1))\leq 1/L,
\end{equation}
where $\chi_{q}$ is the analog of the map $\chi_{q}$ defined at the beginning of Section 4.2.

\textbf{Step 2. The method of Conley.}

Let $\{x_k\}$ be a sufficiently precise nonuniform pseudotrajectory contained in $U(\epsilon_1,\partial M)$. 
We use notations from Section 4.2.
Put $\delta(x_k) = \epsilon(\theta(x_k)^{(1)} C)$. Without of loss of generality we assume the analogs of $\eqref{Udef}$.


\textbf{Proof of Item 1) (Case 1).} Suppose that inequality $(\ref{muineq})$ holds. Without loss of generality by \eqref{Crestr}, assume that $L$ is so large that it satisfies the following inequality
\begin{equation}
\label{ineqL}
C^{2m}(1+1/L)(1+\Delta/L^m)\mu_2^m < \lambda^{s}_{min}.
\end{equation} 

The dynamics of $f$ in Case 1 is depicted on Fig. \ref{Fig2}.  

\begin{figure}[h]
	\centering
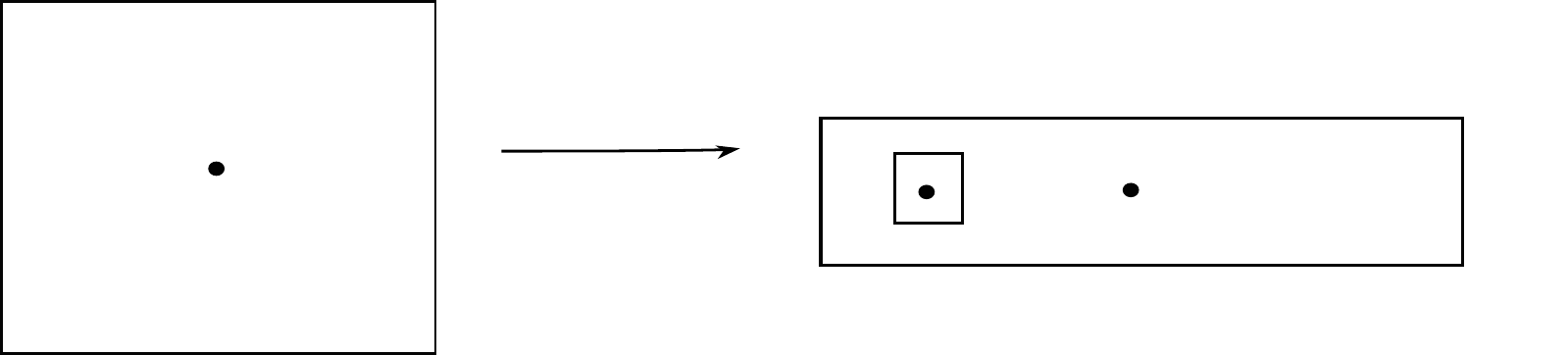
\caption{Mappings of squares in Case 1.}
\label{Fig2}
\end{figure}

We need to prove the following inequality
\begin{equation}
\label{claimup}
U(\theta(x_{k+1}),\epsilon(|\theta(x_{k+1})^{(1)}|C)\subset g_k(U(\theta(x_k),\epsilon(|\theta(x_k)^{(1)}|/C)))
\end{equation}
(where $g_k$ is the corresponding coordinate representation of $f$).
Let us remind the reader that $\epsilon(\cdot)$ satisfies $\eqref{epsdef}$.
Since the Hausdorff distance between $U(\theta(x_{k+1}),\epsilon(|\theta(x_{k+1})^{(1)}|C))$ and $U(\theta(x_{k+1}),\epsilon(|\theta(f(x_k))^{(1)}|C))$
is no more than $\Delta(d(|\theta(f(x_k))^{(1)}|))^m C^{m}\leq \Delta(\epsilon(|\theta(f(x_k))^{(1)}|)/L)^m C^m\leq \Delta C^m/(L^m)$, inequality $(\ref{claimup})$ would follow from
\begin{equation}
\label{claim}
U(\theta(x_{k+1}),(1+\Delta/L^m)\epsilon(|\theta(f(x_k))^{(1)}|)C^m)\subset g_k(U(\theta(x_k),\epsilon(|\theta(x_k)|^{(1)})/C^m)).
\end{equation}

Note that for any $r>0$
\begin{equation}
\label{dpstincl}
U(\theta(x_{k+1}),r) \subset U(\theta(f(x_k)),(1+1/L)r).
\end{equation}
It follows from $(\ref{dpstincl})$ that 
$$U(\theta(x_{k+1}),(1+\Delta/L^m)\epsilon(|\theta(f(x_k))^{(1)}|)C^m)\subset$$
$$\subset U(\theta(f(x_k)),(1+1/L)(1+\Delta/L^m)\epsilon(|\theta(f(x_k))^{(1)}|)C^m).$$ 
Note that (since we may assume that $\epsilon_1$ (and, hence, $\Delta_0$) is sufficiently small)
$$U(\theta(f(x_k)),(\lambda^{s}_{min} - \Delta_0)\epsilon(|\theta(x_k)^{(1)}|)/C^m)\subset g_k(U(\theta(x_k),\epsilon(|\theta(x_k)^{(1)}|)/C^m)).$$
Thus in order to get $(\ref{claim})$ it is enough to prove the following inequality
\begin{equation}
\label{ineq}
(1+1/L)(1+\Delta/L^m)\epsilon(|\theta(f(x_k))^{(1)}|)C^m < (\lambda^{s}_{min} - \Delta_0)\epsilon(|\theta(x_k)^{(1)}|)/C^m.
\end{equation}

Inequality $(\ref{ineq})$ would follow from $(\ref{restim})$ and
\begin{equation}
\label{ineq2}
(1+1/L)(1+\Delta/L^m)\delta(\mu_2+\Delta_0)^m(|\theta(x_k)^{(1)}|)^m C^m < \delta(\lambda^{s}_{min}-\Delta_0)(|\theta(x_k)^{(1)}|)^m/C^m
\end{equation}
Note that inequality $(\ref{ineq2})$ for any sufficiently small $\Delta_0$ follows from
$(\ref{muineq})$ (one of conditions of the theorem) and $(\ref{ineqL})$.

Inequality $(\ref{claimup})$ implies the analog of relation $(\ref{unstable})$ (where the \textit{quasi-unstable} space is $\mathbb{R}^N$, i.e. $\textbf{s}$ is 0 and $\textbf{u}$ is changed to $s+u$). 
Thus it follows from Lemma~$\ref{LK}$ (applied to the sequence of maps $\{g_k\}_k$) that there exists a unique point $q$ such that 
$$f^k(q)\in U(x_{k},\epsilon(r(x_{k})))\quad\forall k\geq0.$$
Item 1) is proved.

The case when only a finite part of a nonuniform pseudotrajectory is contained in $U(\epsilon_1,\Lambda)$ is treated similarly.

\textbf{Proof of Item 2) (Case 2).} Suppose that inequality $(\ref{muineq2})$ holds.

Assume that a nonuniform pseudotrajectory $\{x_k\}_{k\geq0}$ is fully contained in $U(\epsilon_1,\partial M)$.
The dynamics of $f$ in Case 2 is depicted on Fig. \ref{Fig3}.

We will establish nonuniform shadowing for the sequence of maps $\{g_k\}$, and then transfer the property to the map $f$ (it will change only the constant but not the exponent).




We use notations from Section 4.2.
As before, let $pr_s$ and $pr_u$ be natural projections on stable and unstable manifolds along unstable and stable manifolds, respectively, (the stable manifold corresponds to $\theta(x)^{\{u\}} = 0$, and the unstable manifold corresponds to $\theta(x)^{\{s\}} = 0$), i.e. $pr_s x:= x^{\{s\}}$, $pr_u x := x^{\{u\}}$.
Let us check the following analogs of relations $(\ref{stable})$ and $(\ref{unstable})$:
\begin{equation}
\label{claim3a}
pr_s \chi_{\theta(x_{k+1})} U(\theta(x_{k+1}),\epsilon(\theta(x_{k+1})^{(1)}/C^m))\supset pr_s \chi_{\theta(x_{k+1})} (g_k(U(\theta(x_k),\epsilon(\theta(x_k)^{(1)})C^m))),
\end{equation}
\begin{equation}
\label{claim3b} 
pr_u \chi_{\theta(x_{k+1})} U(\theta(x_{k+1}),\epsilon(\theta(x_{k+1})^{(1)}C^m))\subset pr_u \chi_{\theta(x_{k+1})} (g_k(U(\theta(x_k),\epsilon(\theta(x_k)^{(1)})/C^m))).
\end{equation}

Since the Hausdorff distance between projections ($pr_s$ or $pr_u$ respectively) of $U(\theta(x_{k+1}),\epsilon(\theta(x_{k+1})^{(1)}))$  and $U(\theta(x_{k+1}),\epsilon(\theta(f(x_k))^{(1)}))$
is less than $\Delta(d(\theta(f(x_k))^{(1)}))^m\leq\Delta(\epsilon(\theta(f(x_k))^{(1)})/L)^m\leq \Delta/L^m$, it is sufficient to prove that
$$
pr_s \chi_{\theta(x_{k+1})}(U(\theta(x_{k+1}),(1-\Delta/L^m)\epsilon(\theta(f(x_k))^{(1)})/C^m))\supset
$$
\begin{equation} 
\label{claim2a} pr_s \chi_{\theta(x_{k+1})}(g_k(U(\theta(x_k),\epsilon(\theta(x_k)^{(1)})C^m))),
\end{equation}
$$
pr_u \chi_{\theta(x_{k+1})} (U(\theta(x_{k+1}),(1+\Delta/L^m)\epsilon(\theta(f(x_k))^{(1)})C^m))\subset
$$
\begin{equation}
\label{claim2b} 
 pr_u \chi_{\theta(x_{k+1})}(g_k(U(\theta(x_k),\epsilon(\theta(x_k)^{(1)})/C^m))).
\end{equation}


First we prove inclusion $(\ref{claim2b})$. 
It follows from $\eqref{zdef1}$ that (since $\Delta$ can be chosen to be sufficiently small) in order to get inclusion $(\ref{claim2b})$ it is sufficient to prove the following inclusion:
$$
(1+1/L)pr_u \chi_{\theta(f(x_k))} U(\theta(f(x_k)),(1+\Delta/L^m)\epsilon(\theta(f(x_k))^{(1)})C^m)\subset
$$ 
\begin{equation}
\label{claim2balt}
(1-1/L)pr_u \chi_{\theta(f(x_k))}(g_k(U(\theta(x_k),\epsilon(\theta(x_k)^{(1)})/C^m))).
\end{equation}

Fix $z\in U(\theta(x_k),\epsilon(\theta(x_k)^{(1)}))$.
By $\eqref{coordrelUnst}$ applied to $x_k$ and $z$, and by $\eqref{phi2}$, 
$$|pr_u \theta(f(x_k)) - pr_u \theta(f(z))| \geq (\lambda^{u}_{min} - \Delta_0)|\theta(x_k)^{\{u\}} - \theta(z)^{\{u\}}|.$$
Thus, in order to obtain \eqref{claim2balt}, it is sufficient to prove that
$$(1+1/L)(1+\Delta/L^m)\epsilon(\theta(f(x_k))^{(1)})C^m \leq (1-1/L)(\lambda^{u}_{min} - \Delta_0)\epsilon(\theta(x_k)^{(1)})/C^m.$$
We obtain this inequality as soon as we prove that
$$(1+1/L)(1+\Delta/L^m)(\mu_2+\Delta_0)^m |\theta(x_k)^{(1)}|^m C^m\leq (1-1/L)(\lambda^{u}_{min} - \Delta_0)|\theta(x_k)^{(1)}|^m/C^m.$$
However the last inequality holds trivially for any sufficiently small $\Delta_0$, $\Delta$, any sufficiently large $L$, and any $C$ that is sufficiently close to $1$, since $\mu_2<1$ and $\lambda^{u}_{min}>1$. Inclusion \eqref{claim2b} (and, hence, inclusion \eqref{claim3b}) is proved.

Let us prove inclusion $(\ref{claim2a})$.

It follows from \eqref{zdef2} that, in order to get inclusion \eqref{claim2a}, it is sufficient to obtain the following inclusion:
$$
(1-1/L)pr_{s}\chi_{\theta(f(x_k))}U(\theta(f(x_k)), (1-\Delta/L^m)\epsilon(\theta(f(x_k))^{(1)})/C^m)
\supset
$$
\begin{equation}
\label{claim2aalt}
(1+1/L)pr_{s}\chi_{\theta(f(x_k))}(g_k(U(\theta(x_k),\epsilon(\theta(x_k)^{(1)})C^m))).
\end{equation}
Fix $z\in U(\theta(x_k), \epsilon((\theta(x_k))^{(1)}))$. By \eqref{coordrelSt} applied to $x_k$ and $z$, and by \eqref{phi2},
$$|pr_{s}\theta(f(x_k)) - pr_{s}\theta(f(z))|\leq (\lambda^{s}_{max} + \Delta_0)|\theta(x_k)^{\{s\}} - \theta(z)^{\{s\}}|.$$
Thus, in order to prove \eqref{claim2aalt}, it is sufficient to get the inequality
$$(1-1/L)(1-\Delta/L^m)\epsilon(\theta(f(x_k))^{(1)})/C^m \geq (1+1/L)(\lambda^{s}_{max} + \Delta_0)\epsilon(\theta(x_k)^{(1)})C^m.$$
We will prove this inequality as soon as we prove that
$$(1-1/L)(1-\Delta/L^m)(\mu_1-\Delta_0)^m|\theta(x_k)^{(1)}|^{m}/C^m\geq (1+1/L)(\lambda^{s}_{max} + \Delta_0)|\theta(x_k)^{(1)}|^{m}C^m.$$
However it follows from \eqref{muineq2} (one of the conditions of the theorem) that 
the last inequality holds for any sufficiently small $\Delta_0$, $\Delta$, any sufficiently large $L$, and any $C$ that is sufficiently close to $1$.
Inclusion \eqref{claim2a} (and, consequently, inclusion~\eqref{claim3a}) is proved.

Relations \eqref{claim3a} and \eqref{claim3b} are analogs of relations
$(\ref{stable})$ and $(\ref{unstable})$. Thus we can apply Lemma \ref{LK}.
It follows from Lemma \ref{LK} that there exists an $s$-dimensional disk $D_s$ such that for any point $q\in D_s$ 
$$f^k(q)\in U(x_{k},\epsilon(r(x_{k}))\quad\forall k\geq0.$$

The case when only finite part of a pseudotrajectory is contained in $U(\epsilon_1,\Lambda)$ can be treated similarly.


\end{proof}

\subsection{Nonuniform shadowing for flows.}

In these section we formulate the analogs of Theorem \ref{ThM} for flows.

Let $\Phi:\mathbb{R}\times M\mapsto M$ be a flow on a smooth compact Riemannian manifold~$M$ with boundary. Assume that $M$ is embedded in an Euclidean space of sufficiently large dimension. 
We assume that $\Phi$ satisfies the following:

\textbf{Main Assumption for flows.}
There exists a compact locally maximal invariant set $\Lambda\subset \partial M$ such that
for any point $p\in\Lambda$ there exists a one-dimensional subspace $\ell(p)\subset T_{p}M$ such that for some sufficiently large constant $C_0$

0) $\ell(p)\notin T_{p}\partial M$;

1) $\ell(\Phi(t,p)) = D\Phi(t,p)\ell(p)$ for any $t\in\mathbb{R}$; 

2) $\ell(p)$ continuously depends on $p$; 

3) $(\exp(\mu_1 t)/C_0)|v|\leq |D\Phi(t,p)v|\leq C_0\exp(\mu_2 t)|v|$ for any $v\in\ell(p)$, $t\in\mathbb{R}$;

4.a) (if $\mu_2<0$) we choose $\lambda^{s}_{min}$ such that $|D\Phi(t,p)v|\geq\exp(\lambda^{s}_{min}t)|v|/C_{0}$ for any $v\in T_{p}M$, $t\in\mathbb{R}$

OR

4.b) $\Lambda$ is a hyperbolic set for $\Phi$ in the sense of \cite{Pil}, i.e. there exist invariant subspaces $S(p), U(p)\subset T_{p}M$ such that $S(p)\oplus U(p) = T_{p}M$ if $p$ is a fixed point of $\Phi$ or $S(p)\oplus U(p)$ has codimension 1 in $T_{p}M$ (and is transversal to the vector field) if $p$ is not a fixed point, and there exist numbers $\lambda^{s}_{min}\leq\lambda^{s}_{max}<0$, $0<\lambda^{u}_{min}\leq\lambda^{u}_{max}$ such that
$$e^{\lambda^{s}_{min}t}|v|/C_0\leq |D\Phi(t,p)v|\leq C_0 e^{\lambda^{s}_{max}t}|v|,\quad\forall v\in S(p), t\in\mathbb{R},$$
$$e^{\lambda^{u}_{min}t}|v|/C_0\leq |D\Phi(t,p)v|\leq C_0 e^{\lambda^{u}_{max}t}|v|,\quad\forall v\in U(p), t\in\mathbb{R}.$$

For simplicity we assume that $\mu_2<0$ (case $\mu_1>0$ is similar).

%
%
%
%

Theorem \ref{ThM} can be generalized for flows on $M$ in the following way (for simplicity, we treat only the case $\mu_2<0$, and naturally the corresponding analogs for finite shadowing also hold):
\begin{Theorem}
\label{Th2}
Let $\Phi:\mathbb{R}\times M\mapsto M$ be a flow, and let $\Lambda\subset \partial M$ be a compact locally maximal invariant
set. Let $U$ be a sufficiently small neighborhood of $\Lambda$ (such that the analogs of estimates from Main Assumption hold in $U$).

1) Suppose that Main Assumption for flows with Item 4.a) holds, and $m$ is a sufficiently large number such that
\begin{equation}
\label{muineqflows}
e^{\mu_2 m}<e^{\lambda_{max}^{s}},\quad m>\lambda^{s}_{max}/\mu_2;
\end{equation}
then $\Phi$ has nonuniform shadowing with exponent $m$ by a unique point.

2) Suppose that Main Assumption for flows with Item 4.b) holds, and $m$ is such that
\begin{equation}
\label{muineqflows2}
e^{\mu_1 m}>e^{\lambda_{min}^{s}},\quad m<\lambda^{s}_{min}/\mu_1;
\end{equation}
then $\Phi$ has nonuniform shadowing with exponent $m$.
\end{Theorem}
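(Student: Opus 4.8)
\textbf{Proof plan for Theorem \ref{Th2}.}

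The plan is to reduce the flow statement to the already-proven diffeomorphism statement (Theorem \ref{ThM}) via the time-one map, exactly as Proposition \ref{PropA} licenses. First I would take the time-one map $f = \Phi(1,\cdot)$ and verify that Main Assumption for flows (with Item 4.a) or 4.b)) translates into Main Assumption for $f$ with the discrete-time multipliers obtained by exponentiating the flow exponents: the invariant line $\ell(p)$ along the flow gives $\mu_i^{\text{disc}} = e^{\mu_i}$ (up to the constant $C_0$, which is absorbed into the coordinate Lipschitz constant $C$ of the proof of Theorem \ref{ThM}, cf.\ \eqref{Crestr} and \eqref{ineqL}), and the hyperbolicity constants become $\lambda^{s}_{\min,\max} \mapsto e^{\lambda^{s}_{\min,\max}}$, $\lambda^{u}_{\min,\max} \mapsto e^{\lambda^{u}_{\min,\max}}$. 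Under this dictionary, condition \eqref{muineqflows}, namely $e^{\mu_2 m} < e^{\lambda^{s}_{\max}}$, i.e.\ $(e^{\mu_2})^m < e^{\lambda^{s}_{\max}}$, is precisely condition \eqref{muineq} for $f$ (with the slight but harmless discrepancy that Theorem \ref{ThM}.1) is stated with $\lambda^{s}_{\min}$ in Item 4.a); in the flow version one has $\lambda^{s}_{\max}$ playing that role — the point is just that the transversal contraction $\mu_2$ raised to the power $m$ must beat the worst global contraction rate). Likewise \eqref{muineqflows2} matches \eqref{muineq2}.

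Next I would invoke Theorem \ref{ThM} to conclude that $f$ has one-sided nonuniform shadowing property with exponent $m$ in $U$ — by a unique point in Case 1), and along an $s$-dimensional disk $D_s$ in Case 2). Finally I would apply Proposition \ref{PropA} to pass from the time-one map $f$ back to the flow $\Phi$, obtaining nonuniform shadowing for $\Phi$ with the same exponent $m$ (the constant $\Delta$ is replaced by $(1+H)\Delta$ where $H$ bounds the spatial derivative of $\Phi$ over one time unit, as in the proof of Proposition \ref{PropA}). The uniqueness clause in Case 1) and the $D_s$-structure in Case 2) transfer because the shadowing point $q$ for $\Phi$ is forced to be a shadowing point for $f$ on the integer time grid.

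The main obstacle is bookkeeping at the interface between the two main assumptions, not a deep difficulty. One must check that (i) the invariant subspace decomposition $S(p)\oplus U(p)$ for the flow — which has codimension one (along the vector field) at non-fixed points of $\Lambda$ — still yields, for the time-one map, a splitting that fits the framework of Section 4.2 (this is standard: one uses that $\Lambda\subset\partial M$ and the line $\ell(p)$ transverse to $\partial M$ supplies the extra direction, and for a non-fixed point the flow direction is tangent to $\partial M$ and carries no contraction/expansion, so it can be grouped with either $S$ or the boundary-tangent coordinates without affecting the rate inequalities); and (ii) the constant $C_0$ from Item 3) and Item 4) of Main Assumption for flows only inflates the coordinate constant $C$ by a bounded factor, so that — exactly as in \eqref{Crestr} and \eqref{ineqL} — the \emph{strict} inequalities \eqref{muineqflows}, \eqref{muineqflows2} leave enough room to absorb it. I would also remark that no reparametrization is needed because, as noted after the definition of nonuniform shadowing for flows and in Proposition \ref{PropA}, in the situation at hand (shadowing near $\Lambda\subset\partial M$ with monotone approach to the boundary) the identity reparametrization works. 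Once these checks are in place the proof is a direct combination of Theorem \ref{ThM} and Proposition \ref{PropA}, and I would keep the exposition at that level rather than redoing the Conley-method estimates.
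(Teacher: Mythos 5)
Your reduction is fine for Item 1) and is essentially what the paper does, with one correction worth making: the constant $C_0$ in Main Assumption for flows may be large, while the coordinate constant $C$ in the proof of Theorem \ref{ThM} has to be taken close to $1$ (cf.\ \eqref{Crestr}, \eqref{ineqL}), so you cannot ``absorb $C_0$ into $C$''. The paper instead passes to the time-$T$ map for a sufficiently large $T$ (legitimate, since replacing $f$ by $f^{T}$, i.e.\ by finite compositions, preserves shadowing), so that the exponential rates dominate $C_0$; with that fix, Item 1) is indeed Theorem \ref{ThM}.1) combined with Proposition \ref{PropA}, exactly along your lines.

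The genuine gap is in Item 2). When $\Lambda$ contains non-fixed points, the splitting $S(p)\oplus U(p)$ has codimension one and the missing direction is the flow direction, which is neutral: $|D\Phi(t,p)X(p)|=|X(\Phi(t,p))|$ is bounded above and below but obeys no exponential estimate. Consequently the time-one (or time-$T$) map does \emph{not} satisfy Main Assumption for diffeomorphisms with Item 4.b): grouping the neutral direction with $S$ destroys the upper bound $C_0e^{\lambda^{s}_{max}t}|v|$, grouping it with $U$ destroys the lower bound $e^{\lambda^{u}_{min}t}|v|/C_0$, and it cannot be identified with the boundary-transverse line $\ell(p)$, which is a different, separately specified invariant direction with its own rates $\mu_1,\mu_2$. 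Concretely, the Conley-type inclusions \eqref{stable} and \eqref{unstable} fail along the flow direction: a pseudotrajectory's errors in that direction accumulate with neither contraction nor expansion available to correct them (the familiar phase-drift phenomenon, e.g.\ near a periodic orbit, which is precisely why reparametrizations enter the classical shadowing lemma for flows and why the time-one map of a flow near a flow-hyperbolic set is not hyperbolic as a diffeomorphism). So your sentence asserting that the flow direction ``carries no contraction/expansion, so it can be grouped with either $S$ or the boundary-tangent coordinates without affecting the rate inequalities'' is exactly where the argument breaks. The paper's proof of Item 2) does not go through the time-one map at all: it follows the scheme of the standard shadowing lemma for flows from \cite{Pil}, reducing the problem to a \emph{sequence} of diffeomorphisms (built from local transversal sections along the pseudotrajectory, with the flow direction treated separately) and proving the analog of Item 2) of Theorem \ref{ThM} for that sequence; even the paper only sketches this step, calling it ``much more technical''. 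To repair your proposal you would need to carry out that reduction rather than apply Proposition \ref{PropA} directly.
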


\begin{Remark}
Note that, since $\mu_2\leq\lambda^s_{max}$, in Item 1) $m>1$, and, since $\mu_1\geq\lambda^{s}_{min}$, in Item 2) $0<m<1$.
\end{Remark}

\begin{proof}
We start from Item 1). Consider the time-$T$ map $f$ for $\Phi$, where $T$ is a sufficiently large number (such that $(\lambda^{s}_{max})^{T}C<1$, $(\lambda^{u}_{min})^{T}/C>1$). Note that it satisfies Main Assumption for diffeomorphisms with Item 4.a) (clearly~\eqref{muineqflows} implies \eqref{muineq}). Thus by Theorem \ref{ThM} $f$ has nonuniform shadowing with exponent~$m$ (by a unique point). Next we apply Proposition \ref{PropA} and observe that $\Phi$ has nonuniform shadowing with exponent $m$
(by a unique point).

Item 2) is much more technical. That is why in this case we give just a brief outline of the proof. Using reasoning required to prove the standard shadowing lemma for flows (see \cite{Pil}), we conclude that in order to prove nonuniform shadowing for $\Phi$ it is sufficient to prove the analog of Item 2) of Theorem \ref{ThM} for a sequence of diffeomorphisms. This sequence of diffeomorphisms satisfies the analog of Main Assumption for diffeomorphisms with Item 4.b) (the analog of \eqref{muineq2} follows from \eqref{muineqflows2}). The shadowing lemma for a sequence of diffeomorphisms can be proved similarly with Item 2) of Theorem \ref{ThM}, but is much more technical. That is why we do not give a detailed proof here.
\end{proof}

\begin{Theorem}
\label{Th3}
Let $\Phi:\mathbb{R}\times \mathbb{R}^{N}\mapsto \mathbb{R}^{N}$ be a flow and $U$ be a sufficiently small neighborhood of infinity. 

1) Suppose that after the compactification procedure (which includes applying a time change) the compactified flow $\bar{\Phi}:\mathbb{R}\times B_{N}\mapsto B_{N}$ satisfies Main Assumption for flows with Item 4.a), $\mu_2<0$ and $\Lambda = \partial B_N$. Let $m$ be such that \eqref{muineqflows} holds. Then $\Phi$ has noncompact oriented nonuniform shadowing from Proposition \ref{PropB} in $U$ with exponent $3-2m$.

2) Suppose that after the compactification procedure the compactified flow $\bar{\Phi}:\mathbb{R}\times B_{N}\mapsto B_{N}$ satisfies Main Assumption for flows with Item 4.b), $\mu_2<0$ and $\Lambda = \partial B_N$. Let $m$ be such that \eqref{muineqflows2} holds. Then $\Phi$ has noncompact oriented nonuniform shadowing from Proposition \ref{PropB} in $U$ with exponent $3-2m$.
\end{Theorem}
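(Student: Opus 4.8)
The plan is to deduce Theorem \ref{Th3} directly from the two results already established: Theorem \ref{Th2} (nonuniform shadowing for the compactified flow on the ball $B_N$) and Proposition \ref{PropB} (which transfers a nonuniform shadowing property for a compactified flow back to the original noncompact flow, with a precise bookkeeping of how the exponent changes). In other words, Theorem \ref{Th3} is essentially the composition $\text{(Th.\ref{Th2})} \Rightarrow \text{(Prop.\ref{PropB})}$, and the main task is to check that the hypotheses line up and that the exponent arithmetic produces exactly $3 - 2m$.

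First I would prove Item 1). The hypothesis says that the compactified flow $\bar\Phi$ on $B_N$ satisfies Main Assumption for flows with Item 4.a), $\mu_2 < 0$, $\Lambda = \partial B_N$, and that $m$ satisfies \eqref{muineqflows}. These are exactly the hypotheses of Item 1) of Theorem \ref{Th2}, so $\bar\Phi$ has nonuniform shadowing with exponent $\bar m = m$ (by a unique point), say with constants $\bar\delta$ and $\bar n_0$. Now I feed this into Proposition \ref{PropB}: that proposition takes a compactified flow with nonuniform shadowing of exponent $\bar m$ and produces, for the initial flow $\Phi$, the noncompact nonuniform oriented shadowing property with exponent $-2\bar m + 3 = 3 - 2m$ (and with $n_0 = 2\bar n_0 - 3$). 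Since $U$ is a small neighborhood of infinity, which under $\Theta$ corresponds to a small neighborhood of $\partial B_N = \Lambda$, the neighborhood hypotheses match as well. This gives Item 1).

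Item 2) is completely parallel: the hypothesis now matches Item 2) of Theorem \ref{Th2} (Item 4.b), $\mu_2<0$, $\Lambda=\partial B_N$, and $m$ satisfying \eqref{muineqflows2}), which yields nonuniform shadowing for $\bar\Phi$ with exponent $\bar m = m$; then Proposition \ref{PropB} again converts exponent $\bar m$ into exponent $3 - 2\bar m = 3 - 2m$ for $\Phi$ in $U$. The only things to remark on are: (i) in Proposition \ref{PropB} the reparametrization/time-change $\alpha$ is the inverse of the time change used in the Poincaré compactification, so it is canonically available here; and (ii) one should note explicitly that the estimates \eqref{transfer}, \eqref{transferalt}, \eqref{transferprime} from Section 2 are what make the exponent shift by exactly $3/2$ at each of the two "sides" (pseudotrajectory condition and shadowing conclusion), hence $\bar m \mapsto \bar m - 3/2$ on one side and a further factor producing $-2\bar m + 3$ overall — but all of this is already encapsulated in the statement of Proposition \ref{PropB}, so for the proof of Theorem \ref{Th3} I would simply cite it.

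The proof therefore has essentially no genuine obstacle: it is a two-line deduction in each case. If anything, the one point deserving care is verifying that "$m$ satisfies \eqref{muineqflows}" (resp. \eqref{muineqflows2}) is precisely the condition under which Theorem \ref{Th2} applies with $\bar m = m$, and that there is no hidden compatibility requirement between the $\bar n_0$ coming out of Theorem \ref{Th2} and the $n_0 = 2\bar n_0 - 3$ demanded by Proposition \ref{PropB} (one needs $\bar n_0 \geq 3/2$, which is automatic since pseudotrajectory exponents are always $\geq 1$ and the relevant $\bar n_0$ produced in the shadowing lemma is large). I would write the proof as: "Apply Item 1) (resp. Item 2)) of Theorem \ref{Th2} to $\bar\Phi$ to obtain nonuniform shadowing with exponent $m$; then apply Proposition \ref{PropB} with $\bar m = m$, noting that a neighborhood of infinity corresponds under $\Theta$ to a neighborhood of $\partial B_N$, to conclude that $\Phi$ has the noncompact oriented nonuniform shadowing property in $U$ with exponent $3 - 2m$."

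\begin{proof}
Both items are immediate consequences of Theorem \ref{Th2} and Proposition \ref{PropB}.

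For Item 1), the hypotheses ($\bar\Phi$ satisfies Main Assumption for flows with Item 4.a), $\mu_2<0$, $\Lambda=\partial B_N$, and $m$ satisfies \eqref{muineqflows}) are exactly the hypotheses of Item 1) of Theorem \ref{Th2} applied to the compactified flow $\bar\Phi$ on $B_N$. Hence $\bar\Phi$ has nonuniform shadowing with exponent $\bar m = m$ (by a unique point), with some constants $\bar\delta$ and $\bar n_0$. Applying Proposition \ref{PropB} with this $\bar m$, we obtain that the initial flow $\Phi$ has noncompact nonuniform oriented shadowing with exponent $-2\bar m + 3 = 3 - 2m$ and with $n_0 = 2\bar n_0 - 3$ (the required bound $\bar n_0 \geq 3/2$ holds, since the pseudotrajectory exponents appearing in the shadowing lemma are large). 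Since $U$ is a sufficiently small neighborhood of infinity, its image $\Theta(U)$ is a neighborhood of $\partial B_N = \Lambda$ in $B_N$, so the neighborhood hypotheses are compatible. This proves Item 1).

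For Item 2), the argument is identical: the hypotheses match Item 2) of Theorem \ref{Th2}, which gives nonuniform shadowing for $\bar\Phi$ with exponent $\bar m = m$; then Proposition \ref{PropB} converts this into the noncompact oriented nonuniform shadowing property for $\Phi$ in $U$ with exponent $3 - 2\bar m = 3 - 2m$. The time change $\alpha$ is the inverse of the time change used in the Poincar\'e compactification, as in the statement of Proposition \ref{PropB}.
\end{proof}
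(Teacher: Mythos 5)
Your proposal is correct and is essentially the paper's own proof: the paper proves Theorem \ref{Th3} by exactly this two-step deduction, applying Theorem \ref{Th2} to the compactified flow $\bar\Phi$ and then Proposition \ref{PropB} to transfer the property back to $\Phi$, with the exponent bookkeeping $\bar m = m \mapsto 3-2m$. Your additional checks (matching of hypotheses, the role of the time change $\alpha$, and the neighborhood of infinity corresponding to a neighborhood of $\partial B_N$) are consistent with the paper and merely spell out what it leaves implicit.
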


\begin{proof}
The theorem follows from Theorem \ref{Th2} and Proposition \ref{PropB}.
\end{proof}

\begin{Remark}
Note that if $3-2m>0$, i.e. $0<m<3/2$, we get a shadowing property with errors that are not bounded, but their growth is controlled.
Whereas for $m\geq 3/2$ the errors are bounded.
\end{Remark}




\section{Weighted shadowing.}

\subsection{Weighted shadowing for flows on compact manifolds.}

We use notations from Section 3. As before $$\psi(t):=\sup_{|\tau|\leq 1}|\Psi(t+\tau) - \Phi(\tau,\Psi(t))|.$$ Let us formulate the Theorem about weighted shadowing for flows.

\begin{Theorem}
\label{Th4}
Let $\Phi$ be a flow on a compact smooth Riemannian manifold $M$ with the boundary $\partial M$ (e.g., $M=B_{N}$).
Let $U$ be a small neighborhood of~$\partial M$.
There exist constants $C>1$ and $L$ such that for any sufficiently small number $d$ and any $(d,1)$-pseudotrajectory such that
\begin{equation}
\label{dpst1}
\int_{t\geq0} C^t\psi(t)dt\leq d
\end{equation}
there exists a point $p$ such that
\begin{equation}
\label{epst1}
\int_{t\geq0} C^t|\Phi(t,p) - \Psi(t)|dt\leq Ld.
\end{equation}
\end{Theorem}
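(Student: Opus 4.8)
The plan is to reduce Theorem \ref{Th4} to the classical Lipschitz shadowing lemma for flows near a hyperbolic set (Theorem \ref{Thfl}), exploiting the fact that the integral weight $C^t$ decays backwards in time exactly matching the contraction rate towards $\partial M$. First I would observe that, since $M$ is compact and $\partial M$ is an invariant set (being the "infinity" after compactification), one can shrink $U$ so that the dynamics near $\partial M$ is governed by the hyperbolic splitting; the essential point is that approaching $\partial M$, one of the directions (the one transversal to $\partial M$, call it $\ell$) is contracted at some definite rate, and by passing to a large time-$T$ map and to suitable adapted coordinates (as in Step 1 of the proof of Theorem \ref{ThM}) one gets uniform one-step estimates. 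I would choose $C$ so that $C$ times the worst contraction constant along $\ell$ is still less than $1$, and simultaneously $C$ is less than the expansion constant in the complementary unstable-type directions; this is possible precisely because the gap between these rates is bounded away from $1$ on the compact set $\partial M$.

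Next I would set up the shadowing as a fixed-point problem in a weighted sequence space. Discretize: let $x_k = \Psi(k)$, $d_k = \psi(k)$, so that $\sum_k C^k d_k \le C' d$ by \eqref{dpst1}, and I seek a point $p$ with $\sum_k C^k |\Phi(k,p) - x_k| \le L d$ (the continuous-time estimate \eqref{epst1} then follows by a routine interpolation over each unit interval using the bound $H = \max \|\partial\Phi/\partial x\|$, exactly as in the proof of Proposition \ref{PropA}). In the adapted local coordinates $\theta$, write the pseudotrajectory condition as $\theta(x_{k+1}) = g_k(\theta(x_k)) + e_k$ with $|e_k| \le d_k$, and look for a genuine orbit $y_{k+1} = g_k(y_k)$ with $y_0$ to be determined; the deviation $z_k = y_k - \theta(x_k)$ satisfies a linear-plus-higher-order recursion $z_{k+1} = A_k z_k - e_k + (\text{h.o.t.})$, where $A_k$ is hyperbolic with stable rate $\le \lambda^s_{max}$ and unstable rate $\ge \lambda^u_{min}$. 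Splitting $z_k = z_k^s + z_k^u$, the stable component is solved forward from an initial condition and the unstable component is solved backward (summing the series $z_k^u = \sum_{j \ge k} (A_k \cdots A_{j})^{-1}(\cdots)$), and one checks convergence of all these series in the $C^k$-weighted norm: the forward stable sums converge because $C \lambda^s_{max} < 1$, and the backward unstable sums converge because $C < \lambda^u_{min}$ so the weight $C^{k-j}$ decays faster than the unstable expansion can grow it. This is the standard contraction-mapping argument underlying the hyperbolic shadowing lemma, now carried out in a weighted norm; the weight is harmless precisely by the choice of $C$, and one gets a Lipschitz bound $\sum_k C^k |z_k| \le L \sum_k C^k d_k \le L d$.

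The main obstacle I anticipate is twofold. First, genuine hyperbolicity of $\partial B_N$ is \emph{not} an assumption of Theorem \ref{Th4} as stated — only Main Assumption (the existence of the single invariant line field $\ell$ with a spectral gap across $1$) is available in the applications — so one must either work under the weaker Main Assumption (in which case the "stable/unstable" splitting is replaced by the coarse splitting $\ell \oplus (\text{everything else})$, as in Case 1 of Theorem \ref{ThM}, and one only controls the $\ell$-direction, obtaining shadowing by a unique point) or else simply assume full hyperbolicity; I would adopt the Main-Assumption framework and track which directions are genuinely controlled. Second, there is the usual flow-vs-map difficulty: the time-$T$ map is not literally $f^T$ for a fixed $f$ because reparametrizations along the flow direction enter, and one must verify that no reparametrization is needed here — this works because in the specific situation of a flow converging to $\partial B_N$ there are no recurrent orbits near $\partial B_N$ except those \emph{on} $\partial B_N$, and the transversal contraction lets us take the identity reparametrization (this is the remark made after the definition of nonuniform shadowing for flows). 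Assembling: choose $C$ from the spectral gap, discretize, solve the weighted linear cohomological equation by forward/backward summation, absorb the higher-order terms by a contraction mapping on a small weighted ball, then pass back from the discrete estimate to \eqref{epst1} by unit-interval interpolation.
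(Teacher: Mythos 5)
Your overall skeleton (discretize at integer times, prove a weighted discrete estimate for the time-one map, recover \eqref{epst1} by interpolation over unit intervals using a bound on $\|\partial\Phi/\partial x\|$, exactly as in Proposition \ref{PropA}) coincides with the paper's. But the core of your argument is aimed at a different statement and leaves a genuine gap. Theorem \ref{Th4} carries no hyperbolicity hypothesis: it is asserted for an arbitrary smooth flow on a compact manifold with boundary, and the constant $C$ is not tied to any spectral gap. You graft onto it either full hyperbolicity near $\partial M$ or the Main Assumption. Under full hyperbolicity your weighted forward/backward summation plus contraction mapping would prove a more restrictive theorem than the one stated; under the Main Assumption, which is the option you say you would adopt, your scheme controls only the one-dimensional invariant direction $\ell$, and that cannot yield \eqref{epst1}, which bounds the full distance $|\Phi(t,p)-\Psi(t)|$ in all directions. (Your opening claim that the result reduces to Theorem \ref{Thfl} also does not hold up: the classical Lipschitz shadowing lemma gives a sup-norm estimate, not a weighted integral one, and indeed you end up not using it.) The side condition $C<\lambda^{u}_{min}$, which caps $C$, is not by itself fatal since the theorem only asserts existence of some $C>1$, but it signals that your choice of $C$ is driven by a hyperbolic splitting that the theorem does not assume.

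The idea you are missing is the paper's choice of $C$ in the opposite direction: take $C$ \emph{large}, namely $C\geq\max_{|\tau|\leq 1,\,p\in M}\|D\Phi(\tau,p)\|$ as in \eqref{reduc}. Once the weight base dominates the expansion (and backward expansion) rate of the time-one map, the weighted shadowing statement for the discrete system $\Phi(1,\cdot)$ holds with no hyperbolicity whatsoever; this is precisely the discrete-time analog of Theorem \ref{Th4} formulated and proved in Pilyugin's book \cite{Pil}, whose hypothesis is exactly \eqref{reduc} rather than a stable/unstable splitting. The paper's proof therefore consists only of: checking that $x_k=\Psi(k)$ is a weighted pseudotrajectory of $\Phi(1,\cdot)$ in the discrete sense, invoking that discrete theorem to get $q$ with $\sum_k C^k|\Psi(k)-\Phi(k,q)|\leq Ld$, and interpolating. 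So the fix for your proposal is not to refine the hyperbolic estimates but to replace the hyperbolicity-based core by (a proof or citation of) the unweighted-hypothesis discrete result in which the exponential decay of the errors, faster than any possible expansion of the flow, is itself what produces the shadowing orbit.
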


\begin{Remark}
The analog of Theorem \ref{Th4} for discrete dynamical systems was formulated and proved in the book \cite{Pil}. 
\end{Remark}




\begin{proof}
Choose 
\begin{equation}
\label{reduc}
C\geq\max_{|\tau|\leq 1, p\in M} ||D\Phi(\tau, p)||
\end{equation}

Let $\Psi(t)$ be a function that satisfies relations $(\ref{dpst1})$. Put $x_k = \Psi(k)$ for $k\in\mathbb{N}$. Since an integral is a limit of Darboux sums, the sequence $x_k$ satisfies the following discrete analog of $(\ref{dpst1})$
$$\sum C^k|x_{k+1} - \Phi(1,x_k)|\leq d$$
(i.e. it is a weighted pseudotrajectory for $\Phi(1,\cdot)$).
Next due to $(\ref{reduc})$ we apply for $\Phi(1,\cdot)$ the result of Pilyugin for discrete time systems (see \cite{Pil}). Thus there exist a global constant $L$ and a point $q$ such that
$$\sum C^k|x_k - \Phi(k,q)| = \sum C^k|\Psi(k) - \Phi(k,q)|\leq Ld.$$
Let $L_0$ be such that
$|D\Phi(\tau,\cdot)|\leq L_0$ for all $|\tau|\leq 1$.
Since for any $|\tau|\leq 1$
$$|\Psi(k+\tau) - \Phi(k+\tau,q)|\leq |\Psi(k+\tau) - \Phi(\tau,\Psi(k))| + |\Phi(\tau,\Psi(k)) -\Phi(k+\tau,q)|\leq$$ 
$$\leq\psi(t) + C|\Psi(k) - \Phi(k,q)|,$$
and the following holds
$$\sum C^k|\Psi(k+\tau) - \Phi(k+\tau,q)|\leq d + CLd,$$
which gives desired estimate $(\ref{epst1})$ for the integral.

\end{proof}

\subsection{Weighted shadowing for flows on $\mathbb{R}^{N}$.}

In this section we formulate a noncompact version of Theorem \ref{Th4}.


\begin{Theorem}
\label{Th6}
Let $C$ be a sufficiently large number. There exists a map $\alpha:\mathbb{R}\times\mathbb{R}^N\mapsto\mathbb{R}$ such that if $\Psi(t)$ satisfies the following analog of \eqref{dpst1}
\begin{equation}
\label{dpstlast}
\int_{t\geq 0} C^{5t/2}\psi_{\alpha}(t) dt\leq d,
\end{equation}
where 
$$\psi_{\alpha}(t) = \max_{|\tau|\leq 1} |\Psi(t+\tau) - \Phi(\alpha(\tau,\Psi(t)),\Psi(t))|,$$
 then there exists a point $q$ such that the following analog of $(\ref{epst1})$ holds:
\begin{equation}
\label{epstlast}
\int_{t\geq 0} C^{t}|\Phi(\alpha(t,q),q) - \Psi(t)|dt\leq Ld.
\end{equation}
\end{Theorem}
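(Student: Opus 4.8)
The plan is to deduce Theorem \ref{Th6} from Theorem \ref{Th4} in exactly the same spirit as Theorem \ref{Th3} was deduced from Theorem \ref{Th2} and Proposition \ref{PropB}. Namely, I first compactify the flow $\Phi$ on $\mathbb{R}^N$ via the Poincar\'e compactification $\Theta$ of Section~2, obtaining a flow $\bar\Phi$ on $B_N$ (after the accompanying time change). I take $\alpha:\mathbb{R}\times\mathbb{R}^N\mapsto\mathbb{R}$ to be the inverse of that time change, exactly as in the proof of Proposition \ref{PropB}. The point is that $\bar\Phi$ lives on a \emph{compact} manifold with boundary $\partial B_N$, so Theorem \ref{Th4} applies to it: there are constants $\bar C>1$ and $L$ such that any $(d,1)$-pseudotrajectory $\bar\Psi$ of $\bar\Phi$ with $\int_{t\ge0}\bar C^{\,t}\bar\psi(t)\,dt\le d$ is weight-shadowed by an exact trajectory in the sense of \eqref{epst1}.

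The key steps, in order, are: (i) given $\Psi$ satisfying \eqref{dpstlast}, push it forward by $\Theta$ to get $\bar\Psi:=\Theta\circ\Psi$, a candidate pseudotrajectory of $\bar\Phi$; (ii) estimate $\bar\psi(t)$ in terms of $\psi_\alpha(t)$ using the distortion bounds \eqref{transferprime} and \eqref{transferalt}, which give $|\Theta(\Psi(t+\tau))-\bar\Phi(\tau,\Theta(\Psi(t)))|\le |\Psi(t+\tau)-\Phi(\alpha(\tau,\Psi(t)),\Psi(t))|\cdot|r(\bar\Phi(\tau,\bar\Psi(t)))|^{3/2}$, so $\bar\psi(t)\le C_0\,\psi_\alpha(t)$ after absorbing the bounded factor $r(\cdot)^{3/2}\le1$ into a constant; (iii) check that the weighted-pseudotrajectory hypothesis \eqref{dpst1} for $\bar\Psi$ with weight $\bar C^{\,t}$ follows from \eqref{dpstlast} — this is where the exponent $5/2$ rather than $t$ appears, because the decompactification inequality \eqref{transfer} inflates the error near the boundary by a factor $|r|^{-3/2}$, and one needs the extra power of $C^{t/2}$ in the integrand of \eqref{dpstlast} to compensate, choosing $C$ large enough that $C^{5t/2}$ dominates $\bar C^{\,t}$ times the loss; (iv) apply Theorem \ref{Th4} to obtain a point $\bar q=\Theta(q)$ with $\int_{t\ge0}\bar C^{\,t}|\bar\Phi(t,\bar q)-\bar\Psi(t)|\,dt\le Ld$; (v) pull back by $\Theta^{-1}$: using \eqref{transfer}, $|\Psi(t)-\Phi(\alpha(t,q),q)|\le|\bar\Psi(t)-\bar\Phi(\alpha(t,q),\bar q)|\cdot|r(\bar\Phi(t,\bar q))|^{-3/2}$, and one checks that after reparametrizing time by $\alpha$ the weight $C^{t}$ in \eqref{epstlast} is controlled by $\bar C^{\,t}$ times the inflation factor, again absorbed by the generous choice of $C$.

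I expect the main obstacle to be step (iii)/(v): carefully tracking how the weights transform under the time change $\alpha$ and under the radial distortion $r(\cdot)^{\pm3/2}$, so that the exponent $5t/2$ on the pseudotrajectory side and $t$ on the shadowing side are simultaneously consistent with a \emph{single} sufficiently large constant $C$. The subtlety is that $\alpha(t,q)$ reparametrizes time nonlinearly — near the boundary the compactified flow is slowed down — so the integral $\int C^{5t/2}\psi_\alpha(t)\,dt$ in the $\Psi$-time must be compared with $\int \bar C^{\,s}\bar\psi(s)\,ds$ in the $\bar\Phi$-time via a change of variables $s=\alpha(t,\cdot)$, whose Jacobian is again governed by the normalizing factor of the Poincar\'e compactification. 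Since the paper's convention (as stated in the proof of Proposition \ref{PropB}) is to suppress multiplicative constants coming from asymptotic inequalities, these estimates reduce to monotone comparisons of powers, and the whole argument is then a routine, if bookkeeping-heavy, combination of Theorem \ref{Th4} with the transfer inequalities \eqref{transferprime}, \eqref{transfer}, \eqref{transferalt}; no genuinely new idea beyond the compactify--shadow--decompactify scheme is required.
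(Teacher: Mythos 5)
Your overall route---compactify with $\Theta$, take $\alpha$ to be the inverse time change, bound the compactified pseudotrajectory error by $\psi_\alpha$ via \eqref{transferalt}, apply Theorem \ref{Th4} to the compactified flow $\bar\Phi$, and decompactify via \eqref{transfer}---is the paper's route. However, two of your bookkeeping claims are off, and together they hide the one substantive estimate of the proof. First, the exponent $5/2$ does not arise at your step (iii): passing from $\Psi$ to $\bar\Psi=\Theta\circ\Psi$ only \emph{contracts} errors (the factor $r^{3/2}\leq 1$ in \eqref{transferalt}), so there is no loss in that direction; the entire loss of $C^{3t/2}$ occurs at step (v), and $5/2=1+3/2$. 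Second, and more seriously, applying Theorem \ref{Th4} with its own fixed constant $\bar C$ and then absorbing the decompactification inflation ``by the generous choice of $C$'' cannot work as stated: at step (v) you need the pointwise inequality $C^{t}\,r(\bar\Phi(t,\bar q))^{-3/2}\leq \mathrm{const}\cdot\bar C^{t}$, and enlarging $C$ only worsens the left-hand side while $\bar C$ stays fixed.

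The fix, which is what the paper does, is to apply Theorem \ref{Th4} with the weight base $C^{5/2}$ itself (legitimate, since the proof of Theorem \ref{Th4} only requires the base to dominate $\max\|D\bar\Phi\|$ on time-one blocks, with $L$ allowed to depend on it); then the shadowing conclusion carries the weight $C^{5t/2}$. One then needs the estimate
\begin{equation*}
r(\bar\Phi(t,\Theta(q)))^{-3/2}\leq \mathrm{const}\cdot C^{3t/2},
\end{equation*}
which holds because $\partial B_N$ is invariant and $C$ bounds $\|D\bar\Phi\|$, so the distance to the boundary along the exact compactified trajectory decays at most exponentially with rate $\log C$. Dividing the weight $C^{5t/2}$ by this factor $C^{3t/2}$ is precisely what leaves the weight $C^{t}$ in \eqref{epstlast}; your sketch never states this exponential lower bound on $r$ along the shadowing trajectory, and without it the inflation factor is uncontrolled. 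Finally, the Jacobian/change-of-variables worry is moot: in the statement both integrals are taken in the same variable $t$, the time change entering only inside $\Phi(\alpha(\cdot,\cdot),\cdot)=\Theta^{-1}\bar\Phi(\cdot,\Theta(\cdot))$, so no reparametrization of the integrals is required.
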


\begin{proof}
Denote by $\bar{\Phi}$ the compactified flow. We assume that the number $C$ is such that
$$C\geq\max_{|t|\leq 1,x\in B_N{}}|D\bar{\Phi}(t,x)|.$$
Let $\alpha$ be the inverse to time change used in the compactification.
Note that, by \eqref{transferprime} and \eqref{transferalt}, and since $r(\mathbb{R}_{\geq0})\subset [0,1]$,  
$$|\Theta(\Psi(t+\tau)) - \bar{\Phi}(\tau,\Theta(\Psi(t)))| \leq 
|\Psi(t+\tau) - \Phi(\alpha(\tau,\Psi(t)),\Psi(t))|\cdot$$
$$\cdot|r(\bar{\Phi}(\tau,\Theta(\Psi(t))))^{3/2}|
\leq |\Psi(t+\tau) - \Phi(\alpha(\tau,\Psi(t)),\Psi(t))|.$$
Thus, by \eqref{dpstlast},
$$\int_{t\geq 0}C^{5t/2}\max_{|\tau|\leq 1}|\Theta(\Psi(t+\tau)) - \bar{\Phi}(\tau,\Theta(\Psi(t)))| dt\leq \int_{t\geq 0}C^{5t/2}\psi_{\alpha}(t)dt\leq d,$$
which allows us to apply Theorem \ref{Th4} and (by increasing $C$ even more if necessary) to get for some point $\Theta(q)$ the following analog of \eqref{epst1}
\begin{equation}
\label{epst1mod}
\int_{t\geq 0}C^{5t/2}|\bar{\Phi}(t,\Theta(q)) - \Psi(t)|\leq Ld.
\end{equation}

Similarly, by \eqref{transferprime} and \eqref{transfer},
$$|\Phi(\alpha(t,q),q) - \Psi(t)|\leq \frac{|\bar{\Phi}(t,\Theta(q)) - \Theta(\Psi(t))|}{|r(\bar{\Phi}(t,\Theta(q)))|^{3/2}}\leq |\bar{\Phi}(t,\Theta(q)) - \Theta(\Psi(t))|C^{3t/2}.$$
Thus, we derive \eqref{epstlast} from \eqref{epst1mod}:
$$\int_{t\geq 0} C^{t}|\Phi(\alpha(t,q),q) - \Psi(t)|dt \leq
\int_{t\geq 0} C^{5t/2} |\bar{\Phi}(t,\Theta(q)) - \Theta(\Psi(t))| dt\leq Ld.$$

\end{proof}

\begin{section}{Plans for further research.}

\begin{enumerate}
	\item Analogs of theorems about structural stability and $\Omega$-stability for systems with nonuniform shadowing.
	\item Quantitative study of transfer of nonuniform shadowing via time reparametrizations.
	\item Nonuniform shadowing near nonhyperbolic fixed points (analogs of results from \cite{PP}).
	\item Study of shadowing properties of polynomial ODEs.
\end{enumerate}

\end{section}

\begin{section}{Acknowledgment.}

The work of the author was partially supported by and performed at Centro di ricerca matematica Ennio De Giorgi (Italy). Besides, the work of the author was partially supported by Chebyshev Laboratory (Department of Mathematics and Mechanics, St. Petersburg State University) under RF Goverment grant 11.G34.31.0026, and by Leonhard Euler program.

\end{section}

\end{document}